\documentclass[a4paper, 10pt, DIV10, headinclude=false, footinclude=false]{scrartcl}

\usepackage{amsthm, amsmath, amssymb, amsfonts, esint}
\usepackage[utf8]{inputenc}
\usepackage[english]{babel}
\usepackage{url}
\usepackage{mathtools}

\usepackage{subcaption}
\usepackage{tikz}
\usepackage{pgfplots}
\usepackage{pgfplotstable}
\usepackage{booktabs}
\usepackage{algorithm,algpseudocode}


%
\numberwithin{figure}{section}
\numberwithin{table}{section}
\numberwithin{equation}{section}

\newenvironment{abstr}[1]{ \vspace{.05in}\footnotesize
	\parindent .2in
	{\upshape\bfseries #1. }\ignorespaces}{\par\vspace{.1in}}
\newenvironment{Abstract}{\begin{abstr}{Abstract}}{\end{abstr}}
\newenvironment{keywords}{\begin{abstr}{Key words}}{\end{abstr}}
\newenvironment{AMS}{\begin{abstr}{AMS subject classifications}}{\end{abstr}}

\makeatletter

\makeatother

\newtheorem{theorem}{Theorem}[section]

\newtheorem{corollary}[theorem]{Corollary}

\theoremstyle{definition}

\newtheorem{example}[theorem]{Example}
\newtheorem{remark}[theorem]{Remark}

\usepackage{todonotes}

\newcommand{\nz}{\mathbb{N}}
\newcommand{\rz}{\mathbb{R}}
\newcommand{\calC}{\mathcal{C}}
\newcommand{\calN}{\mathcal{N}}
\newcommand{\calQ}{\mathcal{Q}}
\newcommand{\calT}{\mathcal{T}}

\newcommand{\fraka}{\mathfrak{a}}
\newcommand{\frakb}{\mathfrak{b}}

\newcommand{\Vf}{V^\mathrm{f}}
\newcommand{\Vms}{V_{H,m}^\mathrm{ms}}

\allowdisplaybreaks[4]

\begin{document}
	
	\title{An offline-online strategy for multiscale problems with random defects\thanks{This work was initiated while BV enjoyed the kind hospitality of Chalmers University, Gothenburg.
	AM is funded by the Swedish Research Council and the G\"{o}ran Gustafsson
foundation for Research in Natural Sciences and Medicine.
	BV is funded by the German Research Foundation (DFG) -- Project-ID 258734477 -- SFB 1173 and Klaus-Tschira foundation as well as by the Federal Ministry of Education and Research (BMBF) and the Baden-W\"urttemberg Ministry of Science as part of the Excellence Strategy of the German Federal and State Governments.}%
	}
	\author{Axel M\aa lqvist\footnotemark[2] \and Barbara Verf\"urth\footnotemark[3]}
	\date{}
	\maketitle

	\renewcommand{\thefootnote}{\fnsymbol{footnote}}
	\footnotetext[2]{Department of Mathematical Sciences, Chalmers University of Technology and University of Gothenburg, 412 96 G\"oteborg, Sweden
}

\footnotetext[3]{Institut f\"ur Angewandte und Numerische Mathematik, Karlsruher Institut f\"ur Technologie, Englerstr. 2, 76131 Karlsruhe, Germany
}

	\renewcommand{\thefootnote}{\arabic{footnote}}
	
	\begin{Abstract}
	In this paper, we propose an offline-online strategy based on the Localized Orthogonal Decomposition (LOD) method for elliptic multiscale problems with randomly perturbed diffusion coefficient. We consider a periodic deterministic coefficient with local defects that occur with probability $p$.
	The offline phase pre-computes entries to global LOD stiffness matrices on a single reference element (exploiting the periodicity) for a selection of defect configurations. Given a sample of the perturbed diffusion the corresponding LOD stiffness matrix is then computed by taking linear combinations of the pre-computed entries, in the online phase. Our computable error estimates show that this yields a good coarse-scale approximation of the solution for small $p$, which is illustrated by extensive numerical experiments.  This makes the proposed technique attractive already for moderate sample sizes in a Monte Carlo simulation.
	\end{Abstract}
	
	\vspace*{-2ex}
	\begin{keywords}
	numerical homogenization, multiscale method, finite elements, random perturbations
	\end{keywords}
	
	\vspace*{-2ex}
	\begin{AMS}
	65N30, 65N12, 65N15, 35J15
	\end{AMS}

\section{Introduction}
Many modern materials include some fine composite structure to achieve enhanced properties. Examples include fiber reinforced structures in mechanics as well as mechanical, acoustic or optical metamaterials.
The materials are often highly structured, but mistakes in the fabrication process lead to defects. A major question is the robustness of the desired material properties under such defects.
Mathematically speaking, we are interested in the solution of partial differential equations (PDEs) with multiscale, randomly perturbed coefficients. 

In this paper, we study the following elliptic multiscale problem: Find $u:D\to \mathbb{R}$ such that
\begin{equation}\label{eq:pbstrong}
-\nabla \cdot (A(x)\nabla u(x)) = f(x)\qquad \text{in}\quad D
\end{equation}
with suitable boundary conditions.
Here, $D$ is a spatial domain in $\rz^d$ and $f\in L^2(D)$.
The multiscale coefficient $A\in L^\infty(D, \rz)$ is a sample of a randomly perturbed coefficient. More precisely we assume that $A$ is a realization of the form 
\begin{equation}\label{eq:weaklyrandom}
A(x,\omega) = A_{\varepsilon}(x)+b_{p,\varepsilon}(x, \omega)B_{\varepsilon}(x),
\end{equation}
where $A_{\varepsilon}, B_{\varepsilon}$ are deterministic multiscale coefficients and $b_{p,\varepsilon}(x, \cdot)$ is a Bernoulli law with probability $p$, cf.~\cite{AnaLb12}.
Detailed assumptions on the problem data and the form of $A$ are given in Section~\ref{sec:setting} below.
Two important examples of this setup are illustrated in Figure~\ref{fig:randcoeffs}. 
On the left, $A$ is generated from a constant $A_{\varepsilon}$ by introducing square spots with length $\varepsilon$ and probability $p$. 
On the right, $A_{\varepsilon}$ is made of a background value and periodic square inclusion repeating with a periodicity length $\varepsilon$. 
In this case, $A$ is generated by randomly setting some of the inclusions to the background value, thus ``erasing'' them.
The considered model of so-called weakly random coefficients, characterized by small values of $p$, also covers other defect possibilities of the inclusions like a change of value, a (fixed) shift or a (fixed) change of the geometry.

\begin{figure}
	\centering
	\includegraphics[width=0.47\textwidth, trim=19mm 8mm 20mm 10mm, clip=true]{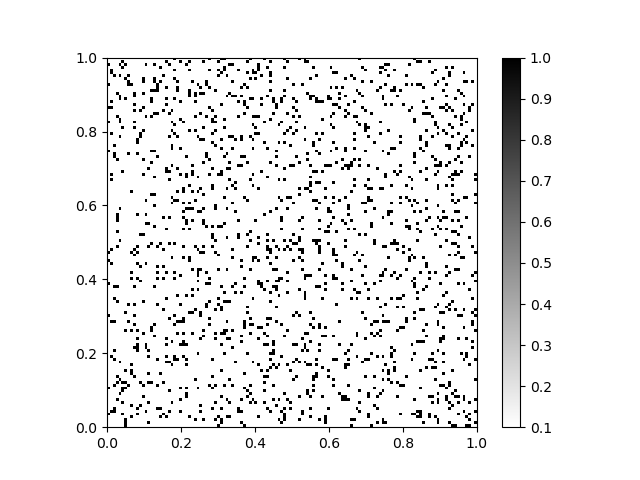}%
	\hspace{2ex}%
	\includegraphics[width=0.47\textwidth, trim=19mm 8mm 20mm 10mm, clip=true]{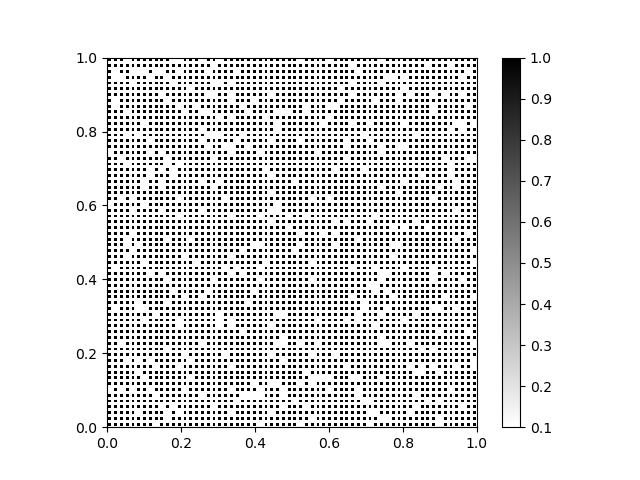}
	\caption{Two examples for weakly random coefficients: random checkerboard ($\varepsilon=2^{-7}, p=0.1$, left) and periodic inclusions with random ``erasure'' ($\varepsilon=2^{-6}, p=0.1$, right)}
	\label{fig:randcoeffs}
\end{figure}

In the context of materials with defects, one is interested in extracting statistical information about the solution $u$.
There are many different uncertainty quantification techniques for PDEs with random coefficients, see, e.g., \cite{BabNT07,GunWZ14,LorPS14} for overviews.
In the following, we focus on Monte Carlo (MC)-type approaches such as Quasi Monte Carlo or Multilevel Monte Carlo (MLMC) \cite{BarSZ11,CliGST11,TecSGU13,ElfHM16}.
Hence, we are interested in (approximate) solutions to~\eqref{eq:pbstrong} for many samples (i.e., realizations) of $A$.
Due to the multiscale nature of $A$, standard discretization schemes like the finite element method would require the mesh to resolve all fine-scale features. Consequently, already the computation of a few solutions to~\eqref{eq:pbstrong} becomes prohibitively costly.
In contrast, computational multiscale methods such as the Localized Orthogonal Decomposition (LOD) \cite{HenP13,MalP14,MalP20,AbdH15} yield faithful coarse-scale approximations with feasible effort after pre-computation of a generalized finite element basis. However, these basis functions incorporate knowledge about the multiscale coefficient $A$ and thus, need be constructed anew for each realization in general. This makes it difficult to apply multiscale methods to stochastic problems. Recently there have been several attempts to circumvent this difficulty, for example the combination of the Multiscale Finite Element Method with Multilevel Monte Carlo  \cite{EfeKL15} or low-rank approximation \cite{OuLJ20}, the multiscale data-driven stochastic method \cite{ZhaCH15}, an approach for a quasi-local homogenized coefficient \cite{FisGP19,GalP19}, and a sparse compression of the expected solution operator \cite{FeiP20}.
In the context of the aforementioned LOD, the recent works \cite{HelM19,HelKM20} deal with rare defects. They propose to compute the multiscale basis for the unperturbed deterministic coefficient and to update this basis only locally for each particular sample.
More precisely, given a sample coefficient $A$, a computable  error indicator shows whether the pre-computed basis is sufficiently good or if a new (improved) basis needs to be computed for that particular sample.
If $p$ is small enough this technique becomes competitive.
A pre-computed, deterministic multiscale basis is also the key idea of the Multiscale Finite Element approach of \cite{LbLT14}.
An asymptotic expansion in the random variable is used for the numerical analysis of \cite{LbLT14} as well as to approximate the effective coefficient of stochastic homogenization  \cite{AnaLb11,AnaLb12,LbT12,Lb14} or to reduce its variance \cite{BlaLbL16,Lb14}.

This contribution is an attempt to make multiscale methods useful for a wider range of random problems. Instead of having one reference coefficient, as in \cite{HelKM20}, we build a ``basis'' of reference coefficients $\{A_i\}_{i=0}^N$ and pre-compute and store the corresponding LOD basis functions $\{\lambda-\mathcal{C}(A_i)\lambda\}_{i=0}^N$, $\lambda$ being the finite element basis function and $\mathcal{C}(B)$ the LOD correction based on coefficient $B$. This is done in the offline phase. We consider problems with periodic structure so that the same set of precomputed basis functions can be used in the entire domain. Given samples of the form $A=\sum_{i=0}^N \mu_i A_i$ we let $\sum_{i=0}^N \mu_iA_i\nabla (\mathcal{C}(A_i)\lambda)$ approximate $A\nabla (\mathcal{C}(A)\lambda)$ in the online phase. This allows for rapid assembly of the LOD stiffness matrices and thereby solution of the problem given samples $A$. We demonstrate theoretically that the error $A\nabla(\mathcal{C}(A)\lambda)-\sum_{i=0}^N \mu_iA_i\nabla(\mathcal{C}(A_i)\lambda)$ is small for small $p$ in one dimension and provide a computable error indicator of this quantity in higher dimensions. We also present numerical experiments for a large variety of configurations of the diffusions which show relative root mean square errors up to $3\%$ for defect probabilities of $p=0.1$ and below in settings with moderate contrast. We compare with \cite{HelKM20} and show a substantial improvement. The strategy pays off in a Monte Carlo setting with a moderate sample size.

The paper is organized as follows. In Section~\ref{sec:setting}, we formulate the model problem and detail the form of $A$.
In Section~\ref{sec:newPGLOD}, we review the Petrov-Galerkin Localized Orthogonal Decomposition (PG-LOD) and introduce our new offline-online strategy.
A priori error estimates for the new method are presented in Section~\ref{sec:analysis}.
We discuss several implementation details with a focus on computational efficiency in Section~\ref{sec:impl}.
Extensive numerical experiments in Section~\ref{sec:numexp} showcase the attractive properties of the method and also illustrate our theoretical findings.

\section{Problem formulation}
\label{sec:setting}
In the following, we detail the setting associated with~\eqref{eq:pbstrong}. We first pose the problem for a fixed event $\omega$ in a probability space $\Omega$ and then discuss the specific form of randomness in the coefficient.
By slight abuse of notation, we will omit the random variable in the following exposition for a fixed, but arbitrary sample.

\subsection{Model problem}\label{sec:setting:pb}
For simplicity we let $D=[0,1]^d\subset \rz^d$ be the unit cell.  
We assume that $f\in L^2(D)$ and that the realization $A\in L^\infty(D, \rz)$ is uniformly bounded and elliptic, i.e.,
\begin{equation}\label{eq:boundsA}
0<\alpha:=\mathrm{ess} \inf_{x\in D} A(x), \qquad \infty>\beta:=\mathrm{ess} \sup_{x\in D} A(x).
\end{equation}
We introduce a function space $V$ where we seek a solution of~\eqref{eq:pbstrong} in weak form.
In this paper we mainly consider a conforming finite element space
\[V:=V_h\subset H^1_{\#, 0}(D)=\{v\in H^1(D)\, |\, v \text{ is periodic}, \int_Dv =0\}\]
defined on a computational mesh $\calT_h$ which resolves the variations in $A$.
Further, we assume $\calT_h$ to be a conforming (i.e., without hanging nodes and edges) and shape regular quadrilateral mesh that can additionally be wrapped into a mesh on the  torus without hanging nodes or edges.
However, it is also possible to choose $V=H^1_{\#,0}(D)$ and the following analysis will still go through.
The weak form of~\eqref{eq:pbstrong} reads as follows: find $u\in V$ such that
\begin{equation}\label{eq:pbweak}
\fraka(u,v)=F(v)\qquad\text{for all}\quad v\in V,
\end{equation}
where 
\[\fraka(u,v):=\int_D A(x)\nabla u(x)\cdot \nabla v(x)\, dx,\qquad F(v):=\int_D f(x)v(x)\, dx.\]
Due to the constraint $\int_D u(x,\omega)\, dx=0$, existence and uniqueness of a solution $u$ is guaranteed by the Lax-Milgram lemma.
We will frequently use the energy norm $\|\cdot \|_A:=\fraka(\cdot, \cdot)^{1/2}$ and its restriction $\|\cdot \|_{A, S}$ to a subdomain $S\subset D$ in the following. Further, $(\cdot, \cdot)_S$ denotes the usual $L^2$-scalar product on $S$ where we omit the subscript if $S=D$.

\begin{remark}
	We consider periodic boundary conditions and box-type domains in this paper to fully exploit the underlying structure in $A$.
	However, with additional computational effort, Dirichlet or Neumann boundary conditions as well as more general Lipschitz domains can be treated, see Remark~\ref{rem:bdrycondoffline}.
	In particular, the error analysis is not restricted to periodic boundary conditions or box-type domains.
\end{remark}

\subsection{Randomly perturbed coefficients}\label{subsec:weaklyrandom}
As mentioned above, we are interested in solving~\eqref{eq:pbweak} for many different (random) choices of $A$.
We now give more details on the form~\eqref{eq:weaklyrandom} of $A(x, \omega)$, similar to the ``weakly'' random setting of \cite{AnaLb12}.
We assume that $A_\varepsilon$ and $B_\varepsilon$ are deterministic multiscale coefficients.
More specifically, we let $A_\varepsilon(x)=A_\mathrm{per}(x/\varepsilon)$ where $A_\mathrm{per}$ is $1$-periodic  and we assume that $\varepsilon =1/n$ with $n\in \mathbb{N}$, $n\gg 1$. The same form $B_\varepsilon(x)=B_\mathrm{per}(x/\varepsilon)$ is assumed for $B_\varepsilon$.
The periodicity assumption -- together with the box-type domain -- is imposed for efficiency reasons of our method, where we emphasize that generalizations are possible, see Remark~\ref{rem:structoffline}.

The deterministic coefficients are assumed to satisfy spectral bounds similar as~\eqref{eq:boundsA}, i.e.,
\begin{equation}\label{eq:boundsAper}
0<\alpha\leq\mathrm{ess} \inf_{x\in D} A_\mathrm{per}(x), \qquad \infty>\beta\geq\mathrm{ess} \sup_{x\in D} A_\mathrm{per}(x).
\end{equation}
and
\begin{equation}\label{eq:boundsAperCper}
0<\alpha\leq\mathrm{ess} \inf_{x\in D} (A_\mathrm{per}(x)+B_\mathrm{per}(x)), \qquad \infty>\beta\geq\mathrm{ess} \sup_{x\in D} (A_\mathrm{per}(x)+B_\mathrm{per}(x)).
\end{equation}
The random character of $A(x,\omega)$ in~\eqref{eq:weaklyrandom} is encoded in $b_{p,\varepsilon}(x,\omega)$ for which we assume
\begin{equation}\label{eq:randomness}
b_{p, \varepsilon}(x,\omega) =\sum_{j \in I}\chi_{\varepsilon(j+Q)}(x)\hat b_p^j(\omega).
\end{equation}
Here, $\chi$ denotes the characteristic function, $Q\subseteq [0,1]^d$ and $I:=\{k\in \mathbb{Z}^d\, |\, \varepsilon(k+Q)\subset D\}$.
Finally, $\hat b_p^j$ are independent random variables adhering to a Bernoulli distribution with probability $p$, i.e., $\hat b_p^j=0$ with probability $1-p$ and $\hat b_p^j=1$ with probability $p$.
Clearly, for $p\to 0$, the defects/perturbations encoded in $b_{p,\varepsilon}$ become rare events.
This choice of $b_{p,\varepsilon}$ together with the assumptions~\eqref{eq:boundsAper}--\eqref{eq:boundsAperCper} guarantee that each realization $A(x, \omega)$ satisfies~\eqref{eq:boundsA}.
We close this section by giving two examples for this setting, namely the formal definition of the coefficients depicted in Figure~\ref{fig:randcoeffs} above.

\begin{example}[Random checkerboard]\label{ex:randcheck}
	Recall that the coefficient in Figure~\ref{fig:randcoeffs}, left, is piece-wise constant on a square mesh $\calT_\varepsilon$. On each square element, the value of $A$ is picked randomly as either $\alpha$ with probability $1-p$ or as $\beta$ with probability $p$.
	This can be described in the form~\eqref{eq:weaklyrandom} and~\eqref{eq:randomness} with $Q=[0,1]^d$, $A_\mathrm{per}=\alpha$ and $B_\mathrm{per}=\beta-\alpha$.
\end{example}

\begin{example}[Periodic coefficient with random defects]\label{ex:randdef}
	Realizations as depicted in Figure~\ref{fig:randcoeffs}, right, can be formalized in the following way.
	We define $A_\mathrm{per}: [0,1]^d\to \rz$ via
	\begin{equation*}
	A_\mathrm{per}(y):=\begin{cases} \beta\quad y\in [0.25, 0.75]^d,\\
	\alpha \quad \text{else}.
	\end{cases}
	\end{equation*}
	Further, we pick $Q=[0.25, 0.75]^d$ and 
	$B_\mathrm{per} =\alpha-\beta$. Note that $B_{\mathrm{per}}$ is only added in the shifted and scaled copies of $Q$.
	Clearly, any other value $\tilde{\beta}\in [\alpha, \beta]$ as defect can be modeled by the choice $B_\mathrm{per} =\tilde \beta-\alpha$.
	Even a value $0<\tilde{\beta}\notin[\alpha, \beta]$ is possible for the defects by changing the spectral bounds.
	If the defect changes the shape of the inclusion, we have to define $Q$ and $B_\mathrm{per}$ accordingly.
	For example, imagine that a defect means that the value $\beta$ is taken in (scaled and shifted copies of) $[0.75, 1]^d$. Then, $A_\mathrm{per}$ is left unchanged, we set $Q=[0,1]^d$ and define $B_\mathrm{per}:Y\to \rz$ via 
	\begin{equation*}
	B_\mathrm{per}(y):=\begin{cases} \alpha-\beta\quad y\in [0.25, 0.75]^d,\\
	\beta-\alpha \quad y\in [0.75, 1]^d,\\
	0 \qquad\quad \text{else}.
	\end{cases}
	\end{equation*} 
\end{example}

\section{Offline-online strategy for the PG-LOD}
\label{sec:newPGLOD}
In this section, we first review the Petrov-Galerkin Localized Orthogonal Decomposition (PG-LOD) in Sections~\ref{subsec:fenotation} and~\ref{subsec:LOD} and then present our new offline-online strategy in Section~\ref{subsec:LODnew}.
Connections with and comparison to homogenization approaches are briefly discussed in Section~\ref{subsec:hom}.
Throughout this paper, we further use the notation $a\lesssim b$ if $a\leq cb$ with a generic constant $c$ that only depends on the shape regularity of the mesh, the domain $D$, or the space dimension $d$.

\subsection{Preliminaries and notation}\label{subsec:fenotation}
Let $\calT_H$ be a coarse, shape regular, quasi-uniform and conforming quadrilateral mesh of the domain $D$.
We further assume that $\calT_H$ can be wrapped into a conforming mesh of the torus, i.e., no hanging nodes and edges occur over the periodic boundary.
Let $H=\max_{T\in \calT_H}\mathrm{diam} T$ denote the mesh size.
The standard lowest-order finite element space on $\calT_H$ is given as
\[V_H:=H^1_{\#, 0}(D)\cap \calQ_1(\calT_H),\]
where $\calQ_1(\calT_H)$ denotes the space of $\calT_H$-piecewise polynomials of coordinate degree at most $1$.
Note that functions in $\calQ_1(\calT_H)$ may be discontinuous.
We further assume that the fine mesh $\calT_h$ is a refinement of $\calT_H$ such that the finite element spaces are nested as $V_H\subset V_h$.

We further introduce a notion of element patches. 
For an arbitrary subdomain $S\subset D$ and $m\in \nz_0$, we define patches $U_m(S)\subset D$ inductively as
\[U_0(S)=S, \qquad U_{m+1}(D)=\bigcup\{T\in \calT_H\,|\, \overline{U_m(S)}\cap \overline{T}\neq \emptyset\}.\]
In this definition, $\calT_H$ is interpreted as a mesh of the torus such that patches are continued over the periodic boundary \cite{OhlV17}.
For $S=T$ with $T\in \calT_H$ we call $U_m(T)$ the $m$-layer element patch and we refer to Figure~\ref{fig:patch} for a visualization.
By the quasi-uniformity of $\calT_H$ we further note that 
\begin{equation}
\max_{T\in \calT_H}\mathrm{card}\{K\in \calT_H\, |\,K\in U_m(T)\}\lesssim m^d.
\end{equation}

\begin{figure}
	\centering
	\begin{tikzpicture}[scale=4]
	\draw (0., 0.)--(0., 1.)--(1., 1.)--(1.,0.)--cycle;
	\filldraw[fill=blue!25!white](0./8., 1./8.) rectangle (4./8., 6./8.);
	\filldraw[fill=blue!25!white](7./8., 1./8.) rectangle (8./8., 6./8.);
	\filldraw[fill=blue!50!white](0./8., 2./8.) rectangle (3./8., 5./8.);
	\filldraw[fill=blue] (1./8., 3./8.) rectangle (2./8., 4./8.);
	
	\foreach \i in {1,...,8} { \draw (\i*1./8, 0)--(\i*1./8, 1); \draw(0,\i*1./8)--(1, \i*1./8);}
	\end{tikzpicture}
	\caption{A mesh element $T=U_0(T)$ (dark blue) and its patches $U_1(T)$ (intermediate blue) and $U_2(T)$ (light blue). The light blue squares on the very right belong to $U_2(T)$ because of the periodic continuation.}
	\label{fig:patch}
\end{figure}
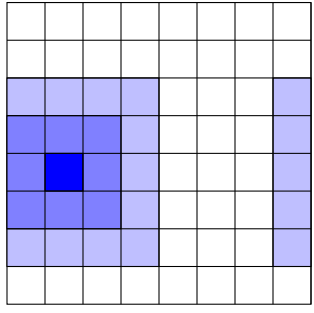

Fine-scale features are not captured in the coarse space $V_H$, i.e., a standard FEM on the coarse scale applied to~\eqref{eq:pbweak} does not yield faithful approximations.
We will characterize fine-scale parts of functions in $V$ as the kernel of a suitable interpolation operator.
We now introduce the required properties as well as an appropriate example.
Let $I_H: V\to V_H$ denote a bounded local linear projection operator, i.e., $I_H\circ I_H = I_H$, with the following stability and approximation properties for all $v\in V$
\begin{align}
\label{eq:IHapprox}
H^{-1}\,\|v-I_H v\|_{L^2(T)}+\|\nabla I_H v\|_{L^2(T)}&\lesssim \|\nabla v\|_{L^2(U_1(T))}.
\end{align}
A possible choice (which we use in our implementation of the method)
is to define $I_H:=E_H\circ\Pi_{H}$, where $\Pi_H: V\to \calQ_1(\calT_H)$ denotes the (local) $L^2$-projection.
$E_H$ is the averaging operator that maps discontinuous functions in $\calQ_1(\calT_H)$  to $V_H$ by
assigning to each free vertex the arithmetic mean of the corresponding
function values of the neighboring cells, that is, for any $v\in \calQ_1(\calT_H)$
and any vertex $z$ of $\calT_H$,
\begin{equation*}
(E_H(v))(z) =
\sum_{T\in\calT_H,\;z\in \overline{T}}v|_T (z) 
\bigg/
\operatorname{card}\{K\in\calT_H\,,\,z\in \overline{K}\}.
\end{equation*}
Note that again $\calT_H$ is understood as a mesh of the torus.
For further details on suitable interpolation operators we refer to \cite{EngHMP19}.

\subsection{Localized multiscale method}\label{subsec:LOD}
Denote by $\Vf=\ker I_H$ the kernel of $I_H$ which characterizes the functions with possible fine-scale variations in $V$.
Further, we introduce the following straightforward restriction of $\Vf$ to patches $U_m(S)$ via
\[\Vf(U_m(S)):=\{v\in \Vf\, |\, v|_{D\setminus U_m(S)}=0\}.\]
The LOD is based on (truncated) correction operators $\calC_m(A)$ defined by
\[\calC_m(A) v=\sum_{T\in\calT_H}\calC_{m,T}(A) v,\]
where the so-called element correction operator $\calC_{m,T}(A) :V\to \Vf(U_m(T))$ associated with the coefficient $A$ solves
\begin{equation}\label{eq:correclocal}
\bigl(A\nabla (\calC_{m,T}(A)v),  \nabla v^f\bigr)_{U_m(T)} = \bigl(A\nabla v,  \nabla v^f\bigr)_T\qquad \text{for all}\quad v^f\in \Vf(U_m(S)).
\end{equation}
Note that these local problems are well-posed by the Lax-Milgram lemma due to the uniform ellipticity of $A$.
The multiscale space $\Vms$ is now constructed as
\[\Vms:=V_H-\calC_m(A) V_H.\]
Denoting by $\calN$ the set of vertices of $\calT_H$ (understood as a mesh of the torus) and $\{\lambda_z\}_{z\in \calN}$ the nodal basis of $V_H$, $\{\lambda_z-\calC_m(A)\lambda_z\}_{z\in\calN}$ is a basis of $\Vms$.

The Petrov-Galerkin LOD (PG-LOD) for~\eqref{eq:pbweak} now reads as: Find $u_m^\mathrm{ms}\in \Vms$ such that
\begin{equation}\label{eq:pglod}
\fraka(u_m^\mathrm{ms}, v)=F(v)\qquad \text{for all}\quad v\in V_H,
\end{equation}
where we can write $u_m^\mathrm{ms}=u_m^H-\calC_m(A) u_m^H$ with $u_m^H\in V_H$.
In this Petrov-Galerkin variant only the ansatz functions are in the multiscale space, whereas the test functions are standard finite element functions.
The advantage over the Galerkin variant is that communication between different element correction operators is avoided. 
Hence, these correction operators, which are fine-scale quantities, do not need to be stored beyond the assembly of local stiffness matrix contributions.
We emphasize that in practical computations $u_m^H\in V_H$ is first determined by solving the linear system associated with~\eqref{eq:pglod}. If required, the element correction operators using $u_m^H$ can be computed to yield the full multiscale approximation $u_m^\mathrm{ms}$.
The PG-LOD~\eqref{eq:pglod} is well-posed for sufficiently large $m$ where no stability issues are reported in practice even for small choices $m=2,3$, cf.~\cite{ElfGH15,HelKM20}.
From \cite[Thm.~2]{ElfGH15} we obtain the following a priori error estimates
\begin{equation}\label{eq:pgloderror}
\|A^{1/2}\nabla (u-u_m^\mathrm{ms})\|_{L^2(D)}+ \|u-u_m^H\|_{L^2(D)}\lesssim (H+m^{d/2}\gamma^m)\, \|f\|_{L^2(D)}
\end{equation}
for some $0<\gamma<1$ independent of $H$ and $m$.
Choosing $m\gtrsim |\log H|$, these estimates essentially show that (i) $u_m^\mathrm{ms}$ converges linearly to $u$ in the energy norm and (ii) $u_m^H$ converges linearly to $u$ in the $L^2(D)$-norm.
In other words, $u_m^H$ is a good $L^2$-approximation to $u$, while the correction operators and thus $u_m^\mathrm{ms}$ are necessary to obtain a good $H^1$-approximation of $u$.

\subsection{Offline-online strategy}\label{subsec:LODnew}
In this section, we suggest an offline-online strategy for the fast computation of the left-hand side in~\eqref{eq:pglod} for many different realizations $A$.
For $v,w\in V_H$, we denote $\frakb(v, w):=\fraka(v-\calC_m(A) v, w)$ and observe that
\begin{equation}\label{eq:bilin}
\frakb(v,w)=\sum_{T\in \calT_H}\frakb_T(v, w)
\end{equation}
with
\begin{equation}\label{eq:bilinlocal}
\frakb_T(v,w):=\int_{U_m(T)}A(x)(\chi_T \nabla v -\nabla (\calC_{m,T}(A) v))(x)\cdot \nabla w(x)\, dx,
\end{equation}
where $\chi$ denotes the characteristic function.
We will from now on assume that the mesh size $H$ is an integer multiple of the periodicity length $\varepsilon$.
This implies that $\frakb_T(\cdot, \cdot)$ for the coefficient $A_\varepsilon$ is identical for every mesh element $T\in \calT_H$.
Hence, only $\frakb_T(\cdot, \cdot)$ for a single $T\in \calT_H$ is required in order to assemble $\frakb(\cdot, \cdot)$ associated with $A_\varepsilon$.

\smallskip
\noindent
\textbf{Offline phase.\hspace{1ex}} 
Fix an element $T\in \calT_H$. Let $J:=\{k\in \mathbb{Z}^d\, |\, \varepsilon(k+Q)\subset U_m(T)\}$ be the index set of possible defects in the patch $U_m(T)$ and denote by $N:= \mathrm{card} J$ its cardinality.
\begin{figure}
	\begin{subfigure}{0.45\textwidth}
		\includegraphics[width=\textwidth, trim=18mm 7mm 18mm 12mm, clip=true]{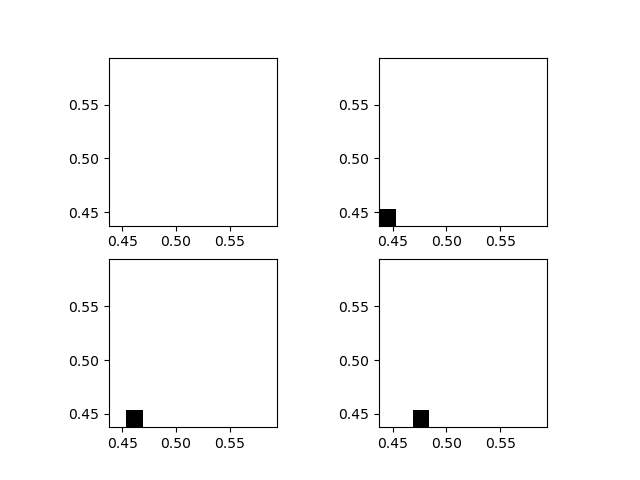}
		\caption{Random checkerboard}
	\end{subfigure}%
	\hspace{5ex}%
	\begin{subfigure}{0.45\textwidth}
		\includegraphics[width=\textwidth, trim=18mm 7mm 18mm 12mm, clip=true]{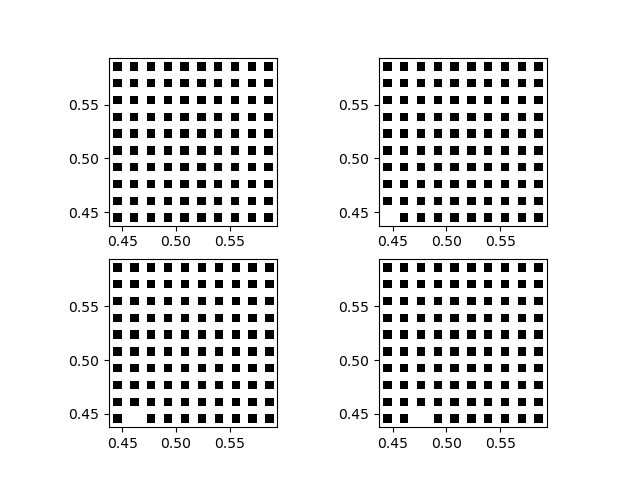}
		\caption{Random defect inclusions}
	\end{subfigure}
	\caption{Offline coefficients $A_0, A_1, A_2, A_3$ (from top left to bottom right) for random checkerboard of Example~\ref{ex:randcheck} (left) and random defect inclusions of Example~\ref{ex:randdef} (right). Generated with $\alpha=0.1$ (white), $\beta=1$ (black), $\varepsilon=2^{-6}$, $H=2^{-5}$, and $m=2$.}
	\label{fig:offline}
\end{figure}

We introduce a bijective mapping $\sigma: \{1,\ldots, N\}\to J$.
Further, we set 
\begin{equation}\label{eq:offlinecoeff}
A_i:=\begin{cases}
A_\varepsilon|_{U_m(T)}, \qquad\qquad \qquad\qquad\quad i=0,\\
A_\varepsilon|_{U_m(T)}+\chi_{\varepsilon(\sigma(i)+Q)}B_\varepsilon $, \qquad $i=1, \ldots, N
\end{cases}
\end{equation}
as our stored offline ``basis'' of coefficients.
Intuitively, this means that $A_i$ is constructed from $A_\varepsilon$ by introducing a single defect.
For the two examples~\ref{ex:randcheck} and~\ref{ex:randdef} of random coefficients of Figure~\ref{fig:randcoeffs} in the introduction, some corresponding $A_i$ are depicted in Figure~\ref{fig:offline} left and right, respectively.

In the offline phase, we compute the local LOD stiffness matrix contributions 
\begin{equation}
\frakb_T^i(\lambda_j, \lambda_k)=\int_{U_m(T)}A_i(x)\bigl(\chi_T \nabla \lambda_j -\nabla (\calC_{m,T}(A_i)\lambda_j)\bigr)(x)\cdot \nabla \lambda_k(x)\, dx,
\end{equation} 
where $\{\lambda_j\}$ is the set of finite element basis functions spanning $V_H$ and $\calC_{m,T}(A_i)$ denotes the element correction operator associated with the coefficient $A_i$.
Note that the stiffness matrix contribution for the fixed element $T$ itself is a coarse-scale object and inexpensive to store.
Additionally, we also assemble the load vector, i.e., the right-hand side of~\eqref{eq:pglod}. For this assembly, we have to consider all mesh elements, but the load vector is the same for all coefficients.

\smallskip\noindent
\textbf{Online phase.\hspace{1ex}} 
Given a sample coefficient $A$ of the form~\eqref{eq:weaklyrandom} and~\eqref{eq:randomness}, there are $\mu_i\in \rz$, $i=0, \ldots, N$ such that $\sum_{i=0}^N \mu_i=1$ and 
\begin{equation}\label{eq:decompA}
A|_{U_m(T)}=\sum_{i=0}^N \mu_i A_i
\end{equation}
for any $T\in \calT_H$.
More specifically, $\mu_i$ for $i=1, \ldots, N$ is determined from the value of $\hat b_p^j$ for a certain $j$. In particular, we have $\mu_i\in \{0,1\}$ for $i=1,\ldots, N$ and
$\mu_0=1-N_{\mathrm{def}}$ where $N_{\mathrm{def}}$ denotes the number of defects in the patch $U_m(T)$.
Note that $\mu_i$ depends (implicitly) on $T$ and that its calculation is cheap.
We further emphasize that \eqref{eq:decompA} is no assumption on $A$, but actually holds for every $A$ of the form \eqref{eq:weaklyrandom} and \eqref{eq:randomness} due to the definition of the $A_i$ in \eqref{eq:offlinecoeff}.

In the online phase, we calculate the global LOD stiffness matrix as a combination of the offline quantities as follows.
With the $\mu_i$ at hand, we compute the local \emph{combined} LOD stiffness matrix contributions as
\begin{equation}\label{eq:onlinebt}
\tilde \frakb_T(\lambda_j,\lambda_k)=\sum_{i=0}^N \mu_i \frakb_T^i(\lambda_j,\lambda_k).
\end{equation}
The global combined bilinear form $\tilde{\frakb}$ is defined as usual via $\tilde{\frakb} =\sum_{T\in \calT_H}\tilde{\frakb}_T.$
Effectively, the sum in~\eqref{eq:onlinebt} only contains $N_{\mathrm{def}}+1$ nonzero terms.
Roughly, only a fraction of $p$ terms thus needs to be considered each time.
After the assembly of the global stiffness matrix, we compute $\tilde u_m^H\in V_H$ as the solution of
\begin{equation}\label{eq:offlineonline}
\tilde{\frakb}(\tilde u_m^H, v_H)=F(v_H)\qquad \text{for all}\quad v_H\in V_H.
\end{equation}
Note that the underlying linear system is of small dimension.

\eqref{eq:pgloderror} shows that $u_m^H$ is a good $L^2$ approximation to the exact solution $u$, but we have to add correctors and consider $u_m^{\mathrm{ms}}$ to obtain a good approximation in $H^1$.
	Hence, also $\tilde u_m^H$ is only expected to be a good $L^2$ approximation to $u$. We define the upscaled solution for our offline-online strategy as 
	\begin{equation}\label{eq:offlineonlineupscaled}
	\tilde u_m^{\mathrm{ms}}=\tilde u_m^H-\tilde \calC_m \tilde u_m^H,\qquad  \text{where} \quad \tilde \calC_m:=\sum_{T\in \calT_H}\sum_{i=0}^N \mu_i \calC_{m,T}(A_i).
	\end{equation}
	This requires to store the correctors $\calC_{m,T}(A_i)$ in the offline phase, but as this is only done for one single element $T$, it is affordable.
	With the offline correctors available, also $\tilde{\calC}_m$ can be assembled quickly and $\tilde u_m^{\mathrm{ms}}$ is readily available.

\smallskip
More details on the (efficient) implementation of the offline-online strategy are given in Section~\ref{subsec:algo}.
As already indicated by the notation, $\tilde{\frakb}$ and $\tilde{u}_m^H$ are only approximations to the true PG-LOD form $\frakb$ and solution $u_m^H$ associated with the sample coefficient $A$, respectively.
We analyze the error committed by the new strategy in the next section.

\begin{remark}\label{rem:bdrycondoffline}
	Because of the periodic boundary conditions and the box-type domain, the local LOD stiffness matrix for $A_\varepsilon$ and the choice of offline coefficients are identical for every element $T$.
	In case of Neumann or Dirichlet boundary conditions or more general domains, there are several representative configurations for the possible patches. 
	One can adapt the offline phase to these situations by calculating and storing the matrix contributions for all possible patch configurations and the associated offline coefficients.
	If the number of possible patch configurations is small, e.g., for a highly structured mesh and domain, the additional effort required may still be feasible.
\end{remark}

\begin{remark}\label{rem:structoffline}
	The assumption of periodic $A_\varepsilon, B_\varepsilon$ with $H$ and the length of the domain being integer multiples of $\varepsilon$ is important to guarantee that  the offline and the samples coefficients all have the same structure for each mesh element. 
	Otherwise, we would need to perform the above described offline phase for all mesh elements. This, in turn, increases the computational time and the storage costs.
	While the additional costs in run-time might be compensated by an effective online phase if sufficiently many samples are considered, the storage may become a bottleneck.
	In future work, one might therefore try to reduce the number of offline coefficients per mesh element or consider adaptive online strategies, cf.~Remark~\ref{rem:indicator}.
\end{remark}

\begin{remark}
	In this presentation we focus on the case when $A = \sum_{i=0}^N \mu_i A_i$. If $A$ is not exactly a linear combination of the $A_i$'s one would instead need to find optimal weights $\{\mu_i\}_{i=0}^N$ to minimize the error. In Section~\ref{sec:analysis}, we present an error estimator presented that bounds the error also for this case. Exactly how to do this optimization is outside the scope of this presentation and we leave it for future investigation.
\end{remark}

	\subsection{Connection to homogenization approaches}\label{subsec:hom}
	Since we assume the diffusion coefficient to be $\varepsilon$-periodic with randomly distributed defects, a natural question to ask is how the proposed strategy relates to  homogenization approaches.
	Under the assumptions of stationarity and ergodicity, which are satisfied in our setting, (quantitative) stochastic homogenization characterizes asymptotic expansions for the expectation of the solution and (homogenized) limit equations for the terms in these expansions. We emphasize that these results are asymptotic in the sense that they consider the limit $\varepsilon\to 0$.
	We refer to \cite{ArmKuuMou19,GloNeuOtt19} for overviews.
	
	Based upon these results, a number of different approaches have emerged that aim to compute approximations to the stochastic homogenization matrix, which then  allows a cheap calculation of the homogenized solution as an approximation to $u_\varepsilon$.
	Giving a complete overview of these approaches is far beyond the scope of this paper, but we would like to highlight the contributions \cite{AnaLb11,AnaLb12} as well as \cite{Lm15} that consider the same setting of a periodic diffusion coefficient perturbed by rare random defects.
	In \cite{AnaLb11,AnaLb12}, the authors deduce an expansion of the stochastic homogenization matrix in terms of the defect probability. 
	The work \cite{Lm15} additionally proposes a control variate technique to reduce the variance.
	The zeroth order term in the expansion is the homogenized matrix for $A_\varepsilon$ obtained by classical periodic homogenization. The first and second order terms are homogenization averages involving cell problems solutions where the coefficient has one or two defects, respectively.
	
	Note that the choice of our offline coefficients is somewhat related since $A_0=A_{\mathrm{per}}$ and the $A_i$ introduce a single defect.
	Further, the LOD is connected to homogenization theory in the way that, for a fixed sample coefficient $A$, it can be reformulated as a finite element-type discretization using a quasi-local integral kernel operator, which can even be approximated by a local ``homogenized'' coefficient in certain situations, see \cite{GalP17} for details. 
	By taking the expectation, a deterministic quasi-local or local coefficient can be defined and can be exploited to approximate the expected value of the solution numerically \cite{FisGP19,GalP19}.
	In this work, however, we are not interested in the stochastic homogenization matrix,  but the solution itself. 
	Further, we do not restrict ourselves to the expectation of the solution either, but through the efficient approximation of $u_\varepsilon$ with our suggested approach, any desired statistical information can be calculated through sampling. In that sense, the aims of our approach differ from homogenization approaches.
	Finally, we emphasize that we never assume $\varepsilon$ to be small (so that we would be in some homogenization regime). Instead, the LOD and our approach can be seen as a numerical homogenization technique on the scale $H$ in the sense that we provide a reasonable approximation to $u_\varepsilon$ that is defined on the coarse mesh $\calT_H$.

\section{A priori eror analysis}\label{sec:analysis}
In this section, we discuss the well-posedness of~\eqref{eq:offlineonline} as well as error estimates for $u-\tilde{u}_m^H$.
To accomplish this, we start by studying the consistency error  $\frakb-\tilde{\frakb}$.
We first consider the one-dimensional case where the correction operators can be explicitly computed and then discuss the generalization to several dimensions.
As in the previous sections, $A$ denotes the true coefficient with associated PG-LOD bilinear form $\frakb$ and the bilinear form of the offline-strategy $\tilde{\frakb}$ is defined via~\eqref{eq:onlinebt}. 

In the one-dimensional setting with $I_H$ chosen as the nodal interpolation operator, the corrector problems automatically localize to single coarse elements, i.e., $m=0$ is sufficient.
Moreover, the correction operators can be explicitly calculated.
Hence, \cite{HenMPSVK20} provides the following result on $\frakb_T$ from~\eqref{eq:bilinlocal} in $d=1$: It holds for any $v,w\in V_H$ that
\[\frakb_T(v, w) = (A_{\mathrm{harm}}|_T \nabla v, \nabla w)_T\]
with the element-wise constant coefficient $A_{\mathrm{harm}}$ defined as
\begin{equation*}
A_{\mathrm{harm}}|_T:=\Bigl(\frac{1}{|T|}\int_T A^{-1}\, dx\Bigr)^{-1}.
\end{equation*}
This means that $\frakb$ can be written as a finite element bilinear form with a modified coefficient, namely the element-wise harmonic mean.
Similarly, $\tilde{\frakb}$ can be written as finite element bilinear form associated with
\begin{equation}\label{eq:Aharmmu}
A_\mathrm{harm}^\mu|_T:=\sum_{i=0}^N \mu_i A_\mathrm{harm}^i|_T,
\end{equation}
where $A_\mathrm{harm}^i$ denotes the harmonic mean of $A_i$.
Let $N_\mathrm{def}$ denote the number of defects in $T$ for the given realization. In the following, we will write $N_\mathrm{def}=\theta_\mathrm{def, T}N$ with $N$ the number of possible defect locations (in $T$), cf.~Section~\ref{subsec:LODnew}.
Note that in this one-dimensional setting, we have $N=H/\varepsilon$.
We abbreviate $\theta_{\mathrm{def}} = \max_{T\in \calT_H} \theta_{\mathrm{def}, T}$.
The representation of $\tilde{\frakb}$ in the one-dimensional setting allows for the following a priori bound.

\begin{theorem}\label{thm:error1D}
	If $D$ is one-dimensional and $I_H$ is the nodal interpolation operator, the consistency error between $\frakb$ from~\eqref{eq:bilin} and $\tilde{\frakb}$ defined via~\eqref{eq:onlinebt} fulfills for any $v, w\in V_H$
	\begin{equation}\label{eq:error1D}
	|(\frakb-\tilde{\frakb})(v,w)|\leq \frac{\beta}{\alpha}\Bigl(\frac{\beta-\alpha}{\alpha}\Bigr)^2 |Q|^2\Bigl(\frac{\varepsilon}{H} \theta_{\mathrm{def}}+2\theta_\mathrm{def}^2\Bigr)\, \|v\|_A \|w\|_A.
	\end{equation}
\end{theorem}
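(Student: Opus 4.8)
The plan is to exploit the representations recalled just before the theorem: in this one-dimensional setting both $\frakb$ and $\tilde{\frakb}$ are finite element forms with element-wise constant effective coefficients, namely the harmonic mean $A_{\mathrm{harm}}|_T$ of the true $A$ and the combination $A_{\mathrm{harm}}^\mu|_T=\sum_i\mu_iA_{\mathrm{harm}}^i|_T$ from \eqref{eq:Aharmmu}. Writing $E_T:=A_{\mathrm{harm}}|_T-A_{\mathrm{harm}}^\mu|_T$ for the coefficient discrepancy on an element, we have
\[
(\frakb-\tilde{\frakb})(v,w)=\sum_{T\in\calT_H}E_T\int_T \nabla v\cdot\nabla w\,dx,
\]
so the whole estimate reduces to bounding $\max_T|E_T|$ and then converting the element-wise $H^1$-seminorm into the energy norm. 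This last step is routine: Cauchy--Schwarz over the elements together with $\|\nabla v\|_{L^2(D)}\le\alpha^{-1/2}\|v\|_A$ gives $|(\frakb-\tilde{\frakb})(v,w)|\le\alpha^{-1}\max_T|E_T|\,\|v\|_A\|w\|_A$, which already accounts for the leading factor $\alpha^{-1}$. Hence the real task is to prove $\max_T|E_T|\le\beta\alpha^{-2}(\beta-\alpha)^2|Q|^2\bigl(\tfrac{\varepsilon}{H}\theta_{\mathrm{def}}+2\theta_{\mathrm{def}}^2\bigr)$.

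The heart of the argument is a closed form for $E_T$. Abbreviating $g_0:=\tfrac1{|T|}\int_T A_\varepsilon^{-1}\,dx$ and using that $H$ is an integer multiple of $\varepsilon$ and that $A_\varepsilon,B_\varepsilon$ are $\varepsilon$-periodic, introducing a single defect in any admissible cell changes $\tfrac1{|T|}\int_T A^{-1}$ by the \emph{same} amount
\[
\delta:=\frac1{|T|}\int_{\varepsilon(j+Q)}\Bigl(\tfrac1{A_\varepsilon+B_\varepsilon}-\tfrac1{A_\varepsilon}\Bigr)\,dx,
\]
independently of $j$. Thus $A_{\mathrm{harm}}^0|_T=g_0^{-1}$, $A_{\mathrm{harm}}^i|_T=(g_0+\delta)^{-1}$ for $i\ge1$, and, since the $N_{\mathrm{def}}$ present defects occupy disjoint cells, $A_{\mathrm{harm}}|_T=(g_0+N_{\mathrm{def}}\delta)^{-1}$. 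With $\mu_0=1-N_{\mathrm{def}}$ and $\sum_{i\ge1}\mu_i=N_{\mathrm{def}}$ one gets $A_{\mathrm{harm}}^\mu|_T=(1-N_{\mathrm{def}})g_0^{-1}+N_{\mathrm{def}}(g_0+\delta)^{-1}$, and a direct computation telescopes to
\[
E_T=\frac{N_{\mathrm{def}}(N_{\mathrm{def}}-1)\,\delta^2}{g_0\,(g_0+\delta)\,(g_0+N_{\mathrm{def}}\delta)}.
\]
I regard establishing this identity as the main conceptual step: the point is that the $O(\delta)$ contributions cancel, so that $E_T$ is genuinely second order in $\delta$ and vanishes whenever $N_{\mathrm{def}}\in\{0,1\}$ (one recognises $A_{\mathrm{harm}}^\mu$ as the secant extrapolation of the convex map $s\mapsto(g_0+s\delta)^{-1}$ through $s=0,1$, evaluated at $s=N_{\mathrm{def}}$). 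It is exactly this second-order smallness that explains the accuracy of the scheme for few defects.

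It then remains to estimate the three ingredients. For $\delta$ I would avoid the crude bound $|A_\varepsilon^{-1}-(A_\varepsilon+B_\varepsilon)^{-1}|\le(\beta-\alpha)/\alpha^2$ and instead note that both $A_\varepsilon^{-1}$ and $(A_\varepsilon+B_\varepsilon)^{-1}$ lie in $[\beta^{-1},\alpha^{-1}]$ by \eqref{eq:boundsAper}--\eqref{eq:boundsAperCper}, so their difference is at most $\alpha^{-1}-\beta^{-1}=(\beta-\alpha)/(\alpha\beta)$; with $|\varepsilon(j+Q)|=\varepsilon|Q|$ and $|T|=H$ this yields $|\delta|\le\tfrac{\varepsilon|Q|}{H}\tfrac{\beta-\alpha}{\alpha\beta}$. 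This sharper estimate, saving one factor $\alpha/\beta$, is precisely what makes the final constant come out as stated rather than with two extra powers of $\beta/\alpha$. Each factor of the denominator is an inverse harmonic mean of a coefficient bounded by $\beta$, hence $\ge\beta^{-1}$, so the denominator is $\ge\beta^{-3}$, giving $|E_T|\le N_{\mathrm{def}}(N_{\mathrm{def}}-1)\tfrac{\varepsilon^2|Q|^2}{H^2}\tfrac{(\beta-\alpha)^2}{\alpha^2\beta^2}\beta^3$. Finally I would substitute $N_{\mathrm{def}}=\theta_{\mathrm{def},T}H/\varepsilon$ and use the elementary inequality $N_{\mathrm{def}}(N_{\mathrm{def}}-1)(\varepsilon/H)^2=\theta_{\mathrm{def},T}^2-\tfrac{\varepsilon}{H}\theta_{\mathrm{def},T}\le\tfrac{\varepsilon}{H}\theta_{\mathrm{def}}+2\theta_{\mathrm{def}}^2$, together with the $\alpha^{-1}$ from the norm conversion, to reach \eqref{eq:error1D}. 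The only genuine obstacles are deriving the telescoping identity and keeping the $(\beta-\alpha)/(\alpha\beta)$ bound sharp enough to match the claimed prefactor $\tfrac{\beta}{\alpha}\bigl(\tfrac{\beta-\alpha}{\alpha}\bigr)^2$.
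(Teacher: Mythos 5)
Your proposal is correct, and its overall skeleton coincides with the paper's: both reduce the consistency error to $\alpha^{-1}\max_{T\in\calT_H}\bigl|A_{\mathrm{harm}}|_T-A_{\mathrm{harm}}^\mu|_T\bigr|$ via Cauchy--Schwarz and the lower spectral bound, and both use the same explicit representation of the harmonic means through period-averaged reciprocals (your $g_0$ and $\delta$ are the paper's $N\overline{A}/|T|$ and $\overline{A}_\mathrm{def}/|T|$). Where you genuinely diverge is the heart of the estimate. The paper inserts $N_\mathrm{def}=\theta_{\mathrm{def},T}N$ and Taylor-expands $\theta\mapsto\bigl(N\overline{A}+\theta N\overline{A}_\mathrm{def}\bigr)^{-1}$ around $\theta=0$ with a Lagrange remainder, then bounds separately the mismatch of first-order terms (which produces the $\frac{\varepsilon}{H}\theta_\mathrm{def}$ contribution) and the second-order remainder (which produces $2\theta_\mathrm{def}^2$). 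You instead derive the exact identity $A_{\mathrm{harm}}|_T-A_{\mathrm{harm}}^\mu|_T=N_\mathrm{def}(N_\mathrm{def}-1)\,\delta^2/\bigl(g_0(g_0+\delta)(g_0+N_\mathrm{def}\delta)\bigr)$, which I have verified: putting the three terms over the common denominator, the numerator indeed collapses to $N_\mathrm{def}(N_\mathrm{def}-1)\delta^2$. Your subsequent bounds are also right: $|\delta|\le\frac{\varepsilon|Q|}{H}\cdot\frac{\beta-\alpha}{\alpha\beta}$, each denominator factor is the mean of the reciprocal of a coefficient bounded above by $\beta$ (using \eqref{eq:boundsAper}--\eqref{eq:boundsAperCper}), hence at least $\beta^{-1}$, and combined with the $\alpha^{-1}$ from the norm conversion this reproduces exactly the prefactor $\frac{\beta}{\alpha}\bigl(\frac{\beta-\alpha}{\alpha}\bigr)^2$. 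What your route buys: the cancellation of first-order terms is made exact rather than estimated, the error is manifestly proportional to $N_\mathrm{def}(N_\mathrm{def}-1)$ and so vanishes whenever an element carries at most one defect, and your argument in fact proves the sharper bound with $\theta_\mathrm{def}^2$ alone in place of $\frac{\varepsilon}{H}\theta_\mathrm{def}+2\theta_\mathrm{def}^2$ (the $\frac{\varepsilon}{H}$ term enters your proof only through the deliberately lossy final inequality used to match the statement). What the paper's route buys is robustness of technique: the expansion-plus-remainder argument does not depend on the difference admitting a closed form, which is the pattern that carries over, in spirit, to the higher-dimensional setting of Theorem~\ref{thm:errorhigherdim} where no such exact representation is available. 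One small remark: your concern about avoiding the ``crude'' pointwise bound $(\beta-\alpha)/\alpha^2$ is well taken but is not actually a point of difference, since the paper also uses $\frac{1}{\alpha}-\frac{1}{\beta}=\frac{\beta-\alpha}{\alpha\beta}$.
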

This theorem clearly underlines why the approach works for small defect probabilities and hence, for small $\theta_\mathrm{def}$.
We emphasize that the $O(\theta_\mathrm{def})$ error contribution is multiplied by  the small factor $\varepsilon/H<1$.
The proof of Theorem~\ref{thm:error1D} is presented in Appendix~\ref{sec:appendix}.

Extending estimate~\eqref{eq:error1D} to higher dimensions is challenging because we do not have an explicit and local representation of $\frakb$.
In the following, we present an upper bound on the consistency error $\frakb-\tilde \frakb$ that is computable in an a posteriori manner.
We emphasize that the result does not require $A|_{U_m(T)} = \sum_{i=0}^N \mu_i A_i$.
We abbreviate $\bar{A}=\sum_{i=0}^N \mu_i A_i$. 

\begin{theorem}\label{thm:errorhigherdim}
	Define for any $T\in \calT_H$
	\begin{equation}\label{eq:indicmultiple}
	\begin{aligned}
	E_{T}^2&:=\max_{v \in V_H:v|_T}\frac{\|(A^{1/2}-A^{-1/2}\bar{A})\chi_T\nabla v- \sum_{i=0}^N \mu_i(A^{1/2}-A^{-1/2}A_i)\nabla (\calC_{m,T}(A_i) v)\|^2_{L^2(U_m(T))}}{\|v\|_{A, T}^2}.
	\end{aligned}
	\end{equation}
	Then, for any $v,w\in V_H$ it holds that
	\begin{equation}\label{eq:consistencymultiple}
	|(\tilde{\frakb}-\frakb)(v,w)|\lesssim m^{d/2}\,  \Bigl(\max_{T\in \calT_H} E_T\Bigr)\, \|v\|_{A}\, \|w\|_{A}.
	\end{equation}
\end{theorem}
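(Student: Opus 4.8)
The plan is to localize the consistency error to the patches $U_m(T)$ and to exploit a Galerkin orthogonality shared by the exact and the combined local fluxes. For $v,w\in V_H$ and a fixed $T\in\calT_H$, I abbreviate the exact flux $q_T:=A\,(\chi_T\nabla v-\nabla(\calC_{m,T}(A)v))$ and the combined flux $\tilde q_T:=\sum_{i=0}^N\mu_i A_i(\chi_T\nabla v-\nabla(\calC_{m,T}(A_i)v))$, so that by \eqref{eq:bilinlocal} and \eqref{eq:onlinebt}
\[(\tilde\frakb-\frakb)(v,w)=\sum_{T\in\calT_H}\int_{U_m(T)}(\tilde q_T-q_T)\cdot\nabla w\,dx.\]
The defining relation \eqref{eq:correclocal} states, for each $A_i$, that the flux $A_i(\chi_T\nabla v-\nabla(\calC_{m,T}(A_i)v))$ is $L^2(U_m(T))$-orthogonal to $\nabla\Vf(U_m(T))$; being a linear combination of such fluxes, $\tilde q_T$ inherits this orthogonality, and so does $q_T$ (with coefficient $A$). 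Hence the difference is patch-orthogonal, $(\tilde q_T-q_T,\nabla w^f)_{U_m(T)}=0$ for all $w^f\in\Vf(U_m(T))$. I stress that this uses neither $\sum_i\mu_i=1$ nor $A|_{U_m(T)}=\bar A$, matching the generality claimed in the statement.

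Next I would normalize the flux difference. Inserting $A^{-1/2}A^{1/2}=1$ and regrouping the terms that multiply $A^{1/2}$ and $A^{-1/2}$ yields the identity
\[A^{-1/2}(\tilde q_T-q_T)=-r_T+A^{1/2}\nabla g,\]
where $r_T$ is precisely the function whose squared $L^2(U_m(T))$-norm appears in the numerator of $E_T$ in \eqref{eq:indicmultiple}, and $g:=\calC_{m,T}(A)v-\sum_{i=0}^N\mu_i\calC_{m,T}(A_i)v\in\Vf(U_m(T))$ is the mismatch between the true corrector and the combined corrector.

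The decisive step is to control the fine-scale mismatch $g$ by the computable residual $r_T$ using the patch-orthogonality. Writing $\tilde q_T-q_T=-A^{1/2}r_T+A\nabla g$ and testing $(\tilde q_T-q_T,\nabla w^f)_{U_m(T)}=0$ with the admissible choice $w^f=g$ gives $\|g\|_{A,U_m(T)}^2=(r_T,A^{1/2}\nabla g)_{U_m(T)}\le\|r_T\|_{L^2(U_m(T))}\,\|g\|_{A,U_m(T)}$, hence $\|A^{1/2}\nabla g\|_{L^2(U_m(T))}\le\|r_T\|_{L^2(U_m(T))}$. With the identity above and the triangle inequality this produces $\|A^{-1/2}(\tilde q_T-q_T)\|_{L^2(U_m(T))}\le 2\|r_T\|_{L^2(U_m(T))}\le 2E_T\|v\|_{A,T}$, where the last inequality is the definition of $E_T$ (noting that $r_T$ and the correctors depend only on $v|_T$, so that the local ratio in \eqref{eq:indicmultiple} is well defined).

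It then remains to assemble the local estimates. A Cauchy--Schwarz inequality with the weight $A$ on each patch gives
\[\Bigl|\int_{U_m(T)}(\tilde q_T-q_T)\cdot\nabla w\,dx\Bigr|\le\|A^{-1/2}(\tilde q_T-q_T)\|_{L^2(U_m(T))}\,\|w\|_{A,U_m(T)}\le 2E_T\|v\|_{A,T}\,\|w\|_{A,U_m(T)}.\]
Summing over $T$, a discrete Cauchy--Schwarz inequality in the element index, the partition identity $\sum_{T}\|v\|_{A,T}^2=\|v\|_A^2$, and the finite-overlap bound $\sum_{T}\|w\|_{A,U_m(T)}^2\lesssim m^d\|w\|_A^2$ (from the cardinality estimate for $m$-layer patches) then yield the factor $m^{d/2}$ and the claim $|(\tilde\frakb-\frakb)(v,w)|\lesssim m^{d/2}(\max_{T}E_T)\|v\|_A\|w\|_A$. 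The main obstacle is the third step: recognizing that $\tilde q_T$ inherits the patch-orthogonality and using it to absorb the otherwise uncontrolled term $A^{1/2}\nabla g$, which is not part of the indicator $E_T$; without this observation the normalized difference would retain the corrector mismatch.
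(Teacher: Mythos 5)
Your proof is correct and is essentially the paper's own argument in slightly different clothing: your flux identity $A^{-1/2}(\tilde q_T-q_T)=-r_T+A^{1/2}\nabla g$ is the paper's decomposition of $(\frakb_T-\tilde\frakb_T)(v,w)$ into the indicator term plus the corrector mismatch, and testing the patch-orthogonality with $w^f=g$ reproduces exactly the paper's estimate \eqref{eq:correcerror} (the orthogonality of the fluxes to $\nabla\Vf(U_m(T))$ \emph{is} the corrector equation \eqref{eq:correclocal}). The local bound $2E_T\|v\|_{A,T}\|w\|_{A,U_m(T)}$ and the final summation via discrete Cauchy--Schwarz and the $m^d$ patch-overlap bound coincide with the paper's proof.
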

In practice, $E_T$ is computed as the largest eigenvalue of an eigenvector problem of dimension $2^{d}$, which is the dimension of the coarse scale finite element space on one element $T$. We only need to store $A_i$ and $\calC_{m,T}(A_i)$ on a single patch $U_m(T)$ and have access to the sampled coefficient $A$. We refer to Section~\ref{subsec:indicator} for details on these implementation aspects.
Note that for $N=0$, i.e., a single ``reference'' coefficient $A_i$, $E_T$ coincides with $e_{u,T}$ defined in \cite[Lemma 3.3]{HelM19}.
Theorem~\ref{thm:errorhigherdim} can thus be interpreted as a generalization to the case of several reference coefficients.

\begin{remark}\label{rem:indicator}
	In the present contribution, we see $E_T$ as a computational tool to easily obtain an upper bound on the actual error without the need to compute $u_m^H$ and, in particular, the correction operators $\calC_{m,T}(A)$ associated with $A$.
	Note that the computation of $E_T$ is not necessary in the offline-online strategy if no error control is required.
	Since $E_T$ can be evaluated without $\calC_{m,T}(A)$, we will investigate its use to build up the offline coefficients $A_i$ or to enrich them during the online phase in future work.
	For instance, in a similar spirit as in \cite{HelM19,HelKM20}, one could use $E_T$ to mark elements where the corrector $\mathcal C_{m,T}(A)$ needs to be newly computed.
		Although $E_T$ gives only an upper bound of the local error between $\calC_{m,T}(A)$ and $\widetilde \calC_{m,T}$ and no lower bound, comparing values of $E_T$ over all elements gives a good indication where such a new computation of $\calC_{m,T}(A)$ will be most beneficial.
		Further, our numerical experiment in Section~\ref{subsec:numexp:ET} indicates that, indeed, $E_T$ is a good indicator for the local error.
\end{remark}

\begin{proof}[Proof of Theorem~\ref{thm:errorhigherdim}]
	Let $v, w\in V_H$ be arbitrary but fixed.
	We will show that for any $T\in \calT_H$ it holds that
	\begin{equation}\label{eq:consislocal}
	|(\frakb_T-\tilde{\frakb}_T)(v,w)|\lesssim E_T \| v\|_{A, T}\, \|w\|_{A, U_m(T)}.
	\end{equation}
	Let us first illustrate how this implies the assertion of the theorem:
	\begin{align*}
	|(\frakb-\tilde{\frakb})(v,w)|&\leq \sum_{T\in \calT_H}|(\frakb_T-\tilde{\frakb}_T)(v,w)|\\
	&\lesssim \sum_{T\in \calT_H}E_T \|v\|_{A, T}\, \|w\|_{A, U_m(T)}\\*
	&\lesssim m^{d/2} \Bigl(\max_{T\in\calT_H}E_T\Bigr)\,  \|v\|_{A}\, \|w\|_{A}.
	\end{align*}
	
	Let us now prove~\eqref{eq:consislocal}. We abbreviate $\calC_{m,T}^i=\calC_{m,T}(A_i)$ and $\calC_{m,T} = \calC_{m,T}(A)$.
	We have
	\begin{align*}
	&\!\!\!\!\frakb_T(v,w)-\tilde \frakb_T(v,w)\\*
	&=\bigl(A(\chi_T\nabla-\nabla \calC_{m,T})v, \nabla w\bigr)_{U_m(T)}- \sum_{i=0}^N\mu_i\bigl(A_i(\chi_T\nabla -\nabla \calC_{m,T}^i )v, \nabla w\bigr)_{U_m(T)}\\
	&=\Bigl( (A-\bar{A})\chi_T \nabla v-\sum_{i=0}^N \mu_i (A-A_i)\nabla \calC_{m,T}^i v, \nabla w\Bigr)_{U_m(T)}\\*
	&\qquad -\Bigl(A\nabla \Bigl(\calC_{m,T}-\sum_{i=0}^N \mu_i \calC_{m,T}^i\Bigr)v, \nabla w\Bigr)_{U_m(T)}\\
	&\leq E_T \|v\|_{A, T}\, \|w\|_{A, U_m(T)} + \Bigl\|\Bigl(\calC_{m,T}-\sum_{i=0}^N \mu_i \calC_{m,T}^i\Bigr)v\Bigr\|_{A, U_m(T)} \, \|w\|_{A, U_m(T)}.
	\end{align*}
	It remains to estimate the second term.
	We abbreviate $z=\calC_{m, T} v-\sum_{i=0}^N \mu_i \calC_{m,T}^i v\in \Vf(U_m(T))$.
	We deduce by the definition of $\calC_{m,T}^i v$ that
	\begin{equation}\label{eq:correcerror}
	\begin{aligned}
	\|z\|_{A, U_m(T)}^2&=\Bigl(A \nabla\Bigl(\calC_{m,T} v-\sum_{i=0}^N \mu_i \calC_{m,T}^i v\Bigr),\nabla z\Bigr)_{U_m(T)}\\
	&=(A\nabla v,\nabla z)_T-\Bigl(A \sum_{i=0}^N \mu_i\nabla(\calC_{m, T}^i v),\nabla z\Bigr)_{U_m(T)}\\
	&=((A-\bar{A})\nabla v,\nabla z)_T-\Bigl(\sum_{i=0}^N (A-A_i)\mu_i \nabla \calC_{m,T}^i v,\nabla z\Bigr)_{U_m(T)}\\
	&\leq \Bigl\| (A^{1/2}-A^{-1/2}\bar{A})\chi_T\nabla v-\sum_{i=0}^N \mu_i (A^{1/2}-A^{-1/2}A_i)\nabla \calC_{m,T}^i v\Bigr\|_{L^2(U_m(T))}\,  \|z\|_{A, U_m(T)}\\
	&\leq E_T\| v\|_{A, T}\, \|z\|_{A, U_m(T)},
	\end{aligned}
	\end{equation}
	which finishes the proof.
\end{proof}

Theorems~\ref{thm:error1D} and~\ref{thm:errorhigherdim} provide bounds on the consistency error
\begin{equation*}
\eta := \sup_{v\in V_H\setminus \{0\}}\sup_{w\in V_H\setminus\{0\}}\frac{|(\frakb-\tilde{\frakb})(v,w)|}{\| v\|_{A}\, \|w\|_{A}}.
\end{equation*}
If the consistency error is sufficiently small, well-posedness of~\eqref{eq:offlineonline} is guaranteed and we also obtain an error estimate as detailed in the next corollary.
The additional term $m^{d/2}\bigl(\max_{T\in \calT_H}E_T\bigr)$ in the $H^1$ bound is due to the additional corrector error $\calC_m-\tilde \calC_m$.

\begin{corollary}\label{cor:error}
	There exist $m_0>0$ and $\eta_0>0$ such that, if $m>m_0$ and $\eta<\eta_0$, \eqref{eq:offlineonline} is well-posed and, further, the error between the solution $u$ of~\eqref{eq:pbweak} and the solution $\tilde{u}_m^H\in V_H$ of~\eqref{eq:offlineonline} and its upscaled version \eqref{eq:offlineonlineupscaled} satisfies
	\begin{align*}
	\|u-\tilde{u}_m^H\|_{L^2(D)}&\lesssim (H+m^{d/2}\gamma^m+ \eta)\|f\|_{L^2(D)},\\
	\|u-\tilde u_m^{\mathrm{ms}}\|_{A}&\lesssim \Bigl(H+m^{d/2}\gamma^m+m^{d/2}\Bigl(\max_{T\in \calT_H}E_T\Bigr)\Bigr)\|f\|_{L^2(D)},
	\end{align*}
	where $\gamma$ is the constant for the exponential decay from \eqref{eq:pgloderror}.
\end{corollary}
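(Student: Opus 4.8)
The plan is to regard \eqref{eq:offlineonline} as an inf-sup stable perturbation of the PG-LOD problem \eqref{eq:pglod} and to reduce both error bounds, via the triangle inequality, to the a priori estimate \eqref{eq:pgloderror} plus the consistency error $\eta$. First I would recall that the well-posedness theory underlying \eqref{eq:pgloderror} (cf.~\cite{ElfGH15}) provides an $m_0>0$ and a constant $c_0>0$, independent of $H$ and $m$, so that for $m>m_0$ the form $\frakb$ satisfies the discrete inf-sup condition $\sup_{w\in V_H\setminus\{0\}}\frakb(v,w)/\|w\|_A\geq c_0\|v\|_A$ for all $v\in V_H$. Setting $\eta_0:=c_0/2$ and using $|(\frakb-\tilde{\frakb})(v,w)|\leq\eta\|v\|_A\|w\|_A$, a standard perturbation argument yields $\sup_w\tilde{\frakb}(v,w)/\|w\|_A\geq(c_0-\eta)\|v\|_A\geq(c_0/2)\|v\|_A$; since $V_H$ is finite-dimensional and the system is square, this inf-sup bound gives injectivity and hence well-posedness of \eqref{eq:offlineonline}.

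Next I would quantify $e_H:=u_m^H-\tilde{u}_m^H\in V_H$. Both functions share the right-hand side $F$, so $\tilde{\frakb}(e_H,v)=(\tilde{\frakb}-\frakb)(u_m^H,v)$ for all $v\in V_H$, and the inf-sup bound for $\tilde{\frakb}$ gives $\|e_H\|_A\lesssim\eta\,\|u_m^H\|_A$. Combining the inf-sup bound for $\frakb$ with $F(v)=(f,v)\lesssim\|f\|_{L^2(D)}\|v\|_A$ (Poincaré and ellipticity) shows $\|u_m^H\|_A\lesssim\|f\|_{L^2(D)}$, hence $\|e_H\|_A\lesssim\eta\,\|f\|_{L^2(D)}$, and the same argument gives $\|\tilde{u}_m^H\|_A\lesssim\|f\|_{L^2(D)}$. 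The $L^2$ estimate then follows from $\|u-\tilde{u}_m^H\|_{L^2(D)}\leq\|u-u_m^H\|_{L^2(D)}+\|e_H\|_{L^2(D)}$, bounding the first term by \eqref{eq:pgloderror} and the second by $\|e_H\|_{L^2(D)}\lesssim\|e_H\|_A\lesssim\eta\,\|f\|_{L^2(D)}$.

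For the energy estimate I would write $\|u-\tilde{u}_m^{\mathrm{ms}}\|_A\leq\|u-u_m^{\mathrm{ms}}\|_A+\|u_m^{\mathrm{ms}}-\tilde{u}_m^{\mathrm{ms}}\|_A$, bound the first term by \eqref{eq:pgloderror}, and split the second using $u_m^{\mathrm{ms}}=(I-\calC_m(A))u_m^H$ together with \eqref{eq:offlineonlineupscaled} as
\[
u_m^{\mathrm{ms}}-\tilde{u}_m^{\mathrm{ms}}=(I-\calC_m(A))e_H+(\tilde{\calC}_m-\calC_m(A))\tilde{u}_m^H.
\]
For the first summand I would invoke the stability $\|(I-\calC_m(A))v\|_A\lesssim\|v\|_A$, valid for $m>m_0$ with an $m$-independent constant because the truncated corrector differs from the ideal, energy-stable corrector by a term of order $m^{d/2}\gamma^m$; this makes the first summand $\lesssim\|e_H\|_A\lesssim\eta\,\|f\|_{L^2(D)}$. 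For the corrector-difference summand I would reuse the local bound $\|(\calC_{m,T}(A)-\sum_{i=0}^N\mu_i\calC_{m,T}(A_i))\tilde{u}_m^H\|_{A,U_m(T)}\leq E_T\,\|\tilde{u}_m^H\|_{A,T}$ established in the chain \eqref{eq:correcerror} within the proof of Theorem~\ref{thm:errorhigherdim}, and sum over $T$ with the finite-overlap bound $\mathcal{O}(m^d)$ for the patches to obtain $\|(\tilde{\calC}_m-\calC_m(A))\tilde{u}_m^H\|_A\lesssim m^{d/2}(\max_{T}E_T)\,\|\tilde{u}_m^H\|_A\lesssim m^{d/2}(\max_{T}E_T)\,\|f\|_{L^2(D)}$. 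Finally, using $\eta\lesssim m^{d/2}\max_{T}E_T$ from Theorem~\ref{thm:errorhigherdim} to absorb the $e_H$ contribution, both summands are of order $m^{d/2}\max_{T}E_T$, which gives the claimed bound.

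The main obstacle is the energy estimate, and specifically keeping the power of $m$ correct: the crude bound $\|\calC_m(A)v\|_A\lesssim m^{d/2}\|v\|_A$ applied to $e_H$ would produce a spurious $m^{d}\max_{T}E_T$ term. The crux is therefore to argue that $I-\calC_m(A)$ is stable with an $m$-independent constant (through its closeness to the ideal corrector), so that only the genuinely localized corrector-difference term contributes the single factor $m^{d/2}$ arising from patch overlap. By contrast, the transfer of the inf-sup constant and the resulting $L^2$ estimate are routine once the PG-LOD stability of \cite{ElfGH15} is available.
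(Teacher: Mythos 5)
Your proposal is correct and follows essentially the same route as the paper's proof: stability of $\tilde{\frakb}$ obtained by perturbing the PG-LOD form by the consistency error $\eta$, the triangle inequality combined with \eqref{eq:pgloderror} and Friedrichs' inequality for the $L^2$ bound, and for the energy bound a splitting of $u_m^{\mathrm{ms}}-\tilde u_m^{\mathrm{ms}}$ into a corrector-stability term applied to $u_m^H-\tilde u_m^H$ plus a corrector-difference term controlled by \eqref{eq:correcerror} and the $O(m^d)$ patch overlap. The only cosmetic differences are that the paper proves coercivity of $\tilde{\frakb}$ directly (via the ideal corrector $\calC_\infty$ and the exponential decay bound) instead of perturbing an inf-sup constant cited from \cite{ElfGH15}, and that it uses the symmetric variant of your splitting, applying $\calC_m-\tilde\calC_m$ to $u_m^H$ and the stability of $\tilde\calC_m$ to $u_m^H-\tilde u_m^H$, while you apply $\tilde\calC_m-\calC_m$ to $\tilde u_m^H$ and the ($m$-uniform) stability of $I-\calC_m$ to the difference.
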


\begin{proof}
	We proceed similar to the proof of Theorem~4.1 in \cite{HelKM20}. 
	To show the well-posedness of~\eqref{eq:offlineonline}, we prove the coercivity of $\tilde{\frakb}$ if $\eta<\eta_0$ and $m>m_0$.
	We again abbreviate $\calC_m=\calC_m(A)$ and $\calC_{m,T}=\calC_{m,T}(A)$. 
	By $\calC_{\infty, T}$ we denote the element correction operator $\calC_{m,T}$ with $m=\infty$, i.e., where the integral on the left-hand side of~\eqref{eq:correclocal} is taken over the whole domain $D$.
	We set $\calC_\infty=\sum_{T\in\calT_H}\calC_{\infty, T}$ and note that $\fraka(v-\calC_\infty v, \calC_\infty w)=0$ for all $v,w\in V_H$.
	Let $v\in V_H$ be arbitrary. We deduce
	\begin{align*}
	\tilde{\frakb}(v,v)&=\frakb(v,v)+(\tilde{\frakb}-\frakb)(v,v) =\fraka(v-\calC_m v, v)+(\tilde{\frakb}-\frakb)(v,v)\\
	&=\fraka(v-\calC_\infty v, v)+\fraka(\calC_\infty v-\calC_m v, v) +(\tilde{\frakb}-\frakb)(v,v)\\
	&\geq (c_1+c_2m^{d/2}\gamma^m -\eta)\|v\|^2_{A},
	\end{align*}
	where we used
	\[\fraka(v-\calC_\infty v, v)=\fraka(v-\calC_\infty v, v-\calC_\infty v)=\|v-\calC_\infty v\|^2_{A}\geq c_1 \|I_H(v-\calC_\infty v)\|^2_{A}=\|v\|_{A}^2\]
	and 
	\[\|(\calC_\infty-\calC_m)v\|_{A}\leq c_2 m^{d/2}\gamma^m \|A^{1/2}\nabla v\|_{L^2(D)}\]
	in the last step.
	Clearly, the are $m_0$ and $\eta_0$ such that for $m>m_0$ and $\eta<\eta_0$, $c_1-c_2m^{d/2}\theta^m -\eta$ can be bounded from below by a positive constant. This shows the coercivity of $\tilde{\frakb}$.
	
	For the error estimates, we use the triangle inequality to get
	\[\|u-\tilde{u}_m^H\|_{L^2(D)}\leq \|u-u_m^H\|_{L^2(D)}+\|u_m^H-\tilde{u}_m^H\|_{L^2(D)},\]
	where the first term is estimated in~\eqref{eq:pgloderror}.
	By the coercivity of $\tilde{\frakb}$ we obtain for $u_m^H-\tilde{u}_m^H$ that
	\begin{align*}
	\|u_m^H-\tilde{u}_m^H\|_{A}^2 &\lesssim \tilde{\frakb}(u_m^H-\tilde{u}_m^H, u_m^H-\tilde{u}_m^H)= \tilde{\frakb}(u_m^H, u_m^H-\tilde{u}_m^H)-F( u_m^H-\tilde{u}_m^H) \\
	&= (\tilde{\frakb}-\frakb)(u_m^H, u_m^H-\tilde{u}_m^H)\\
	&\lesssim \eta\, \|f\|_{L^2(D)}\,  \|u_m^H-\tilde{u}_m^H\|_{A},
	\end{align*}
	where we used the stability of the PG-LOD solution $u_m^H$ in the last step. Application of Friedrich's inequality yields the $L^2$-bound.
	
	For the $H^1$ bound, we first note that for any $v\in V_H$
	\begin{align*}
	\|(\calC_m-\tilde\calC_m)v_H\|_A^2&= \Bigl\|\sum_{T\in \calT_H}(\calC_{m,T}-\tilde\calC_{m,T})v_H\|_A^2\\
	&\lesssim \sum_{T\in \calT_H}m^d\|(\calC_{m,T}-\tilde\calC_{m,T})v_H\|_A^2\\
	&\lesssim m^d\Bigl(\max_{T\in \calT_H}E_T\Bigr)\|v\|_A^2,
	\end{align*}
	where we used the finite overlap and estimate \eqref{eq:correcerror} from the proof of Theorem~\ref{thm:errorhigherdim}.
	We now deduce that
	\begin{align*}
	\|u-\tilde u_m^{\mathrm{ms}}\|_A&\leq \|u-u_m^{\mathrm{ms}}\|_A+\|u_m^H-\calC_m u_m^H-\tilde u_m^H+\tilde \calC_m\tilde u_m^H\|_A\\
	&\leq \|u-u_m^{\mathrm{ms}}\|_A+\|u_m^H-\tilde u_m^H\|_A+\|\tilde\calC_m(u_m^H-\tilde u_m^H)\|_A+\|(\tilde \calC_m-\calC_m)u_m^H\|_A\\
	&\lesssim \|u-u_m^{\mathrm{ms}}\|_A+\|u_m^H-\tilde u_m^H\|_A+\|(\tilde \calC_m-\calC_m)u_m^H\|_A,
	\end{align*}
	where we used in the last step that
	\[\|\tilde \calC_m v\|_A\leq \|\calC_\infty v\|_A+\|(\calC_\infty-\calC_m)v\|_A+\|(\calC_m-\tilde \calC_m)v\|_A\lesssim \Bigl(1+m^{d/2}\gamma^m+m^{d/2}\Bigl( \max_{T\in \calT_H}E_T\Bigr)\Bigr)\|v\|_A.\]
	Combining the above estimates with \eqref{eq:pgloderror}, the already obtained estimate for $\|u_m^H-\tilde u_m^H\|_A$ in the proof of the $L^2$ bound, and the stability of the PG-LOD solution finishes the proof.
\end{proof}

\begin{remark}
	In the one-dimensional case, $\tilde{\frakb}$ is coercive if and only if $A_\mathrm{harm}^\mu$ in~\eqref{eq:Aharmmu} is positive on each element.
	A sufficient condition -- alternative to a small consistency error -- is $B_\varepsilon\geq 0$, i.e., the random perturbation is always additive to $A_\varepsilon$. 
	In more detail, if $B_\varepsilon\geq 0$, we can show $A_\mathrm{harm}^i|_T\geq A_\mathrm{harm}^0|_T$ for $i=1,\ldots, N$. Because of $\mu_i\geq 0$ for $i=1,\ldots, N$ and $\sum_{i=0}^N \mu_i=1$, we deduce $A_\mathrm{harm}^\mu|_T\geq \alpha>0$.
	The sufficient condition $B_\varepsilon\geq 0$ is satisfied for Example~\ref{ex:randcheck}, but not for Example~\ref{ex:randdef}.
\end{remark}

\section{Implementation aspects}\label{sec:impl}
This section deals with implementation details specific to the presented offline-online strategy with a focus on the algorithm  (Section~\ref{subsec:algo}), its comparison concerning run-time to  the method of \cite{HelKM20} (Section~\ref{subsec:complexity}), and the implementation of $E_T$ from Theorem~\ref{thm:errorhigherdim} (Section~\ref{subsec:indicator}).
For the general implementation of the (PG-)LOD we refer to \cite{EngHMP19} and \cite[Ch.~7]{MalP20}.

\subsection{Algorithm for the offline-online strategy and memory consumption}\label{subsec:algo}
Algorithm~\ref{algo:offlineonline} shows how to carry out the procedure from Section~\ref{subsec:LODnew} computationally.
For simplicity, we focus on the computation of $\tilde u_m^H$ only and omit the additions for the upscaled version $\tilde u_m^{\mathrm{ms}}$.
\begin{algorithm}
	\caption{Offline-online strategy}
	\label{algo:offlineonline}
	\begin{algorithmic}[1] 
		\State \textbf{input:} Problem data $A_\varepsilon$, $B_\varepsilon$, $Q$, $f$ (cf.~Section~\ref{sec:setting})
		\State Pick $m$
		\State Fix $T\in \calT_H$ \Comment{start offline phase}
		\State Precompute and save offline coefficients $\{A_0, A_1, \ldots A_N\}$ according to~\eqref{eq:offlinecoeff}
		\For {$i=0, \ldots N$} 
		\State Precompute $\calC_{m,T}(A_i)\lambda_j$ for all $j$  (discard at end of iteration)
		\State Precompute and save $\frakb_T^i(\lambda_j, \lambda_k)$ for all $j$ and $k$
		\EndFor
		\State Precompute and save $F(\lambda_j)$ for all $j$ \Comment{end offline phase}
		\ForAll {sample coefficients $A$} \Comment{start online phase}
		\ForAll {$T\in \calT_H$}
		\State Compute $\mu_i$ such that $A|_{U_m(T)}=\sum_{i=0}^N \mu_i A_i$
		\State Compute and save $\tilde \frakb_T(\lambda_j, \lambda_k)=\sum_{i=0}^N \mu_i \frakb_T^i(\lambda_j, \lambda_k)$ for all $j$ and $k$
		\EndFor
		\State Assemble stiffness matrix $\tilde{K}_{kj}:=\sum_{T\in \calT_H}\tilde{\frakb}_T(\lambda_j, \lambda_k)$ 
		\State Solve for $\tilde{u}_m^H$ according to~\eqref{eq:offlineonline} 
		\EndFor
	\end{algorithmic}
\end{algorithm} 

It starts with setting up the offline coefficients.
Due to the periodicity and the weakly random structure, only the fine-scale representations of $A_\varepsilon$ and $B_\varepsilon$ on a single coarse element $T$ need to be stored with a cost of order $(H/h)^d$.
$\frakb_T^i$ is computed for all offline coefficients, but only for a single coarse element. The storage cost is of order $N\, m^d$ where the number of offline coefficients $N$ is of the order $(mH/\varepsilon)^d$.
We especially emphasize that, for moderate $m$ and not too coarse $H$, we have $N<O(\varepsilon^{-d})= \mathrm{card} I$ with the index set $I$ from~\eqref{eq:randomness}.
This means that there are less possible defects in the element patch $U_m(T)$ than in $D$.
The pre-computations for the offline coefficients can be executed in parallel.
Finally, we also assemble and store the load vector in the offline phase with a cost of order $H^{-d}$.

In the online phase, we perform a loop over the sample coefficients (i.e., Monte-Carlo-type sampling) which again can be executed in parallel.
For each coefficient, there is a loop over the elements which is also parallelizable. For each element, we extract the $\mu_i$ forming the representation of $A$ in terms of the offline coefficients. Due to the representation~\eqref{eq:randomness}, this does not require extensive computations.
We emphasize that many $\mu_i$ are identically zero for rare perturbations so that the sum for $\tilde{\frakb}_T$ is evaluated cheaply.
Finally, the coarse-scale linear system with $\tilde{\frakb}$ is assembled and solved. The assembly of the stiffness matrix involves a reduction over $T$, but only coarse-scale quantities like $\tilde{\frakb}_T$ of amount $m^dH^{-d}$ are needed between different elements, cf.~\cite{HelKM20}.
From the coarse-scale solution $\tilde{u}_m^H$ for each sample coefficient, we can of course compute quantities of interest or agglomerate statistical information.

\subsection{Run-time complexity}\label{subsec:complexity}
Based on Algorithm~\ref{algo:offlineonline}, let us briefly comment on the run-time complexity in comparison to the standard LOD and the LOD with local updates according to \cite{HelKM20}.
We consider the time taken for the stiffness matrix assembly for $M_\mathrm{samp}$ sample coefficients and consider completely sequential versions of all methods.
In the following, $t_{\mathrm{stiff}}$ measures the time for the assembly of a local LOD stiffness matrix contribution for a given coefficient according to~\eqref{eq:bilinlocal}.
We assume that this time does not depend on the given coefficient or the coarse element $T$ considered. 
Further, $n_H$ denotes the number of elements in $\calT_H$ which is of the order $H^{-d}$.

For the standard LOD, the global LOD matrix is newly computed for each sample. Hence, the total time for LOD matrix assemblies amounts to
$
t_{\mathrm{tot}}^s:= M_{\mathrm{samp}} n_H t_{\mathrm{stiff}}.
$
In the LOD with local updates of \cite{HelKM20},
the LOD stiffness matrix for $A_0=A_{\varepsilon}$ is calculated on a single element.
For each sample coefficient $A$, an error indicator is then evaluated for each element $T$ which requires a time of $n_H t_{\mathrm{ind}}$.
For the fraction $p_{\mathrm{recomp}}$ of elements where the error indicator is the largest, the LOD stiffness matrix is computed anew based on $A$. 
Overall, the LOD with local updates takes a total matrix assembly time of
$t_{\mathrm{tot}}^u := t_{\mathrm{stiff}} + M_{\mathrm{samp}}(n_H t_{\mathrm{indic}}+p_{\mathrm{recomp}}n_H t_{\mathrm{stiff}}).$

In the offline-online strategy, we first compute LOD stiffness matrices for $(N+1)$ coefficients, yielding a run-time of $(N+1)t_{\mathrm{stiff}}$ in the offline phase.
We recall that $N$ is of the order $(mH/\varepsilon)^d$.
In the online phase, we denote by $t_{\mathrm{comb}}$ the time to combine the LOD stiffness matrices.
Hence, we obtain the total matrix assembly time for the offline-online strategy as
$t_{\mathrm{tot}}^o:= (N+1)t_{\mathrm{stiff}} +  M_{\mathrm{samp}} n_H t_{\mathrm{comb}}.$

We observe that the offline-online strategy easily outperforms the standard LOD, i.e., $t_\mathrm{tot}^o< t_\mathrm{tot}^s$ because we can expect $t_{\mathrm{comb}} \ll t_{\mathrm{stiff}}$: Forming the linear combination in~\eqref{eq:offlineonline} is much faster than computing the correction operator $\calC_{m,T}(A)$.
The main goal thus is to outperform the LOD with local updates, i.e., to achieve $t_\mathrm{tot}^o< t_\mathrm{tot}^u$.
This requires $t_{\mathrm{comb}}<t_{\mathrm{indic}}+ p_{\mathrm{recomp}}t_{\mathrm{stiff}}$, which is easily achievable in practice, and
we can even hope for $t_{\mathrm{comb}}<t_{\mathrm{indic}}$.
We then deduce that $t_\mathrm{tot}^o< t_\mathrm{tot}^u$ if and only if
$M_{\mathrm{samp}}>\frac{Nt_{\mathrm{stiff}}}{n_H(t_{\mathrm{indic}}+p_{\mathrm{recomp}}t_{\mathrm{stiff}}-t_{\mathrm{comb}})}.$
With the previous comments on the relation of $t_\mathrm{comb}, t_\mathrm{stiff}$ and $t_\mathrm{indic}$, the offline-online strategy will be more efficient than the LOD with local updates already for moderate sample sizes. This holds especially in the regime $H\approx \sqrt{\varepsilon}$ where $N/n_H$ is small.

\subsection{Computation of $E_T$}\label{subsec:indicator}
We finally give some details on the implementation of $E_T$ from Theorem~\ref{thm:errorhigherdim}, where we only consider the case $A=\bar{A}$ for simplicity.
When computing $E_T$ we set up an eigenvalue problem. We denote by $\lambda_j$ the local basis functions of $V_H$ on $T$. There are $d+1$ basis functions for simplicial meshes and $2^d$ on quadrilateral meshes.
Now, $E_T$ is the square root of the largest eigenvalue to the generalized eigenvalue problem
$$
\mathbf{S}v=\nu \mathbf{B}v,
$$
where 
\begin{align*}
S_{jk}&=\Bigl(\sum_{i=0}^N \mu_i(A^{1/2}-A^{-1/2}A_i)\nabla (\calC_{m,T}(A_i) \lambda_k),\sum_{i=0}^N \mu_i(A^{1/2}-A^{-1/2}A_i)\nabla (\calC_{m,T}(A_i) \lambda_j)\Bigr)_{U_m(T)} \\*
B_{jk}&=(A\nabla \lambda_k,\nabla \lambda_j)_T.
\end{align*}

To assemble $\mathbf{S}$, we need to store $\nabla \calC_{m,T}(A_i)\lambda_j$ for $i=0,\dots,N$ and $j=1,\dots,d$ (since $\sum \lambda_j=1$) on each (fine)  element of the patch surrounding $T$ (in case of a simplicial mesh).
Since these gradients are piecewise constant on the fine mesh, this amounts to $(N+1)d$ vectors of length $d$ per fine element.
Note that the number of fine elements in a patch is of the order $(mH/h)^d$.
Hence, additional to the cheap storage of $A_i$ (see Section~\ref{subsec:algo}), quantities of order $Nd^2(mH/h)^d$ have to be stored for computing $E_T$.
We emphasize that $\nabla \calC_{m,T}(A_i)\lambda_j$ only needs to be stored for a single, fixed element $T$ due to periodicity.
For each $A=\sum_{i=0}^N \mu_i A_i$ we then need to compute the component-wise products between $(A^{1/2}-A^{-1/2}A_i)\mu_i$ and the stored $\nabla \calC_{m,T}(A_i)\lambda_j$, sum over $i$ and finally compute the integrals for the combinations of $1\leq j,k\leq d+1$. Here, we can exploit symmetry and the fact that many $\mu_i$ are zero. 
We emphasize that the computation of $\textbf{B}$ is cheaper.

\begin{remark}
	We note once more that $E_T$ is not required in the offline-online strategy, but serves as error control. Hence, instead of evaluating $E_T$ for each sample coefficient, one could also try to estimate the distribution of $E_T$ by sampling (via sampling $\mu_i$).
	This could be done for a single element $T$ in the (extended) offline phase so that $\calC_{m, T}(A_i)$ can be discarded for the online phase as before.
\end{remark}

\section{Numerical experiments}\label{sec:numexp}
In this section, we present extensive numerical examples in one and two dimensions on the unit cell $D=[0,1]^d$.
We choose $f=8\pi^2\sin(2\pi x)$ in the one-dimensional experiment and $f=8\pi^2\sin(2\pi x_1)\cos(2\pi x_2)$ in two dimensions.
Our code is based on \texttt{gridlod} \cite{HelK19} and is publicly available at \url{https://github.com/BarbaraV/gridlod-random-perturbations}.

We consider the following relative errors between the PG-LOD solution $u_m^H$ for the given sample coefficient and the solution $\tilde{u}_m^H$ of our offline-online strategy as well as their upscaled version $u_m^{\mathrm{ms}}$ and $\tilde u_m^{\mathrm{ms}}$, respectively, 
	\begin{equation}\label{eq:relerror}
	\frac{\|u_m^H-\tilde u_m^H\|_{L^2(D)}}{\|u_m^H\|_{L^2(D)}}\qquad\text{and}\qquad
	\frac{|u_m^{\mathrm{ms}}-\tilde u_m^{\mathrm{ms}}|_{H^1(D)}}{|u_m^{\mathrm{ms}}|_{H^1(D)}}.
	\end{equation}
We additionally take the root mean square error over $M_\mathrm{samp}$ samples.

\subsection{One-dimensional example}
We set $h=\varepsilon=2^{-8}$ and randomly assign to each interval of length $\varepsilon$ the value $\alpha=0.1$ with probability $1-p$ or the value $\beta=1.0$ with probability $p$.
We compute the maximal error between the element-wise harmonic means $A_\mathrm{harm}|_T$ and $A_\mathrm{harm}^\mu|_T$, cf.~Section~\ref{sec:analysis}, as well as the relative $L^2(D)$-errors for the solutions (with $m=0$).
The root mean square errors over $500$ samples in dependence on the probability $p$ and on the mesh size $H$ are depicted in Figure~\ref{fig:1d:errors}.
On the left, we see that all curves for the harmonic means show an overall quadratic behavior as expected from Theorem~\ref{thm:error1D}. Moreover, we also see the predicted increase of the constant with decreasing $H$.
Figure~\ref{fig:1d:errors} (right) illustrate the quadratic dependence of the root mean square errors in the solution on $p$. Here, however, the curves for different $H$ lie closer together.
Moreover, we observe that the relative $L^2(D)$-errors (slightly) decrease with decreasing $H$.
Overall, we also observe that, in this setting, root mean square errors are below $3\%$ for probabilities uo to $p=0.2$.

\begin{figure}
	\centering
	\includegraphics[width=0.7\textwidth, trim=5mm 0mm 15mm 10mm, clip=true]{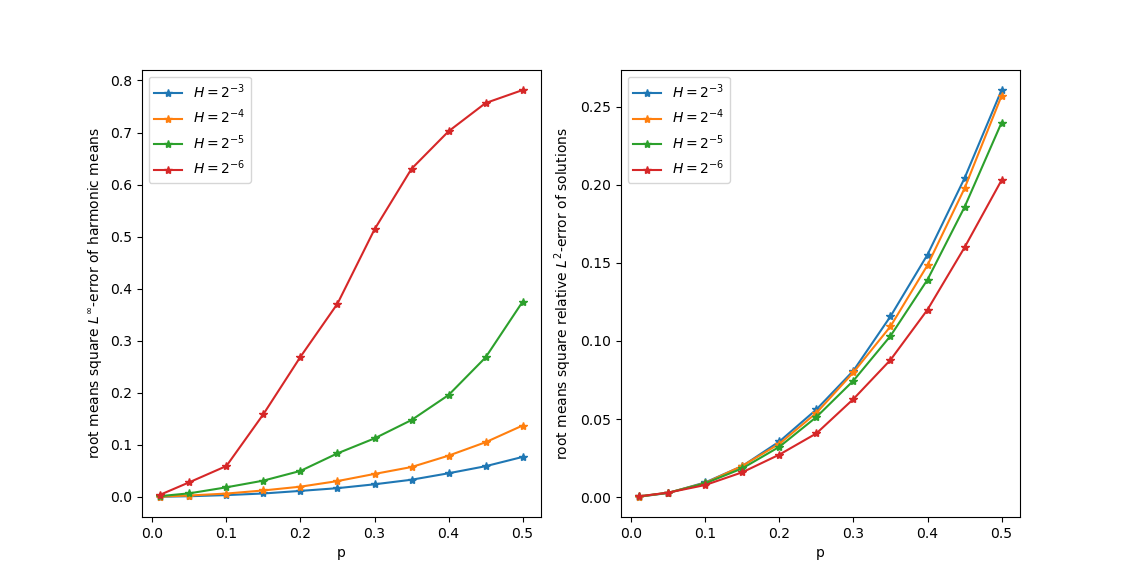}
	\caption{Root mean square errors for harmonic means in $L^\infty(D)$-norm (left) and root mean square relative $L^2(D)$-errors of the solution (right) versus probability for the one-dimensional random checkerboard with $\varepsilon=2^{-8}$ and varying $H$.}
	\label{fig:1d:errors}
\end{figure}

\subsection{Random checkerboard coefficient}\label{subsec:numexp:randcheck2d}
We now consider a two-dimensional random checkerboard coefficient, i.e., we set $d=2$ and consider Example~\ref{ex:randcheck} with $\varepsilon=2^{-7}$, $\alpha=0.1$ and $\beta=1.0$.
For the LOD we set $h=2^{-8}$, so that all fine-scale details are resolved, as well as $H=2^{-5}$ and $m=4$.
The choice of $H$ and $m$ ensures that $u_m^H$ of the standard LOD is a good reference solution.
All the following results are obtained with $M_\mathrm{samp}=250$.
First, we investigate the behavior of the root mean square relative $L^2(D)$- and $H^1(D)$-error of our new approach in dependence on the probability $p$.
We observe in Figure~\ref{fig:2d:errorsvsprob} (left) that the root mean square $L^2$-error stays below roughly $3\%$ and the $H^1$-error below about $10\%$ for probabilities up to $p=0.1$ in this example. Further, both errors grow rather moderately with $p$. Both observations indicate the good performance of our new approach for this example.
In contrast, simply computing the LOD solution for $A_\varepsilon$ -- which is deterministic -- gives a rather poor approximation as expected, see Figure~\ref{fig:2d:errorsvsprob}, right.
More precisely, both root mean square errors are an order of magnitude higher than for the new approach. In detail, the $L^2$-error is already almost $2\%$ and the $H^1$-error about $12\%$ for $p=0.01$ in this example.
Moreover, a closer inspection of the data indicates that the root mean square errors grow faster in the right figure than in the left one: For the offline-online strategy the $L^2$-errors seem to grow as $p^{3/2}$ and the $H^1$-errors as $p$ in comparison to a growth like $p$ for the $L^2$-errors and of $p^{1/2}$ for the $H^1$-errors for the LOD solution with $A_\varepsilon$.
A detailed investigation of the exact dependence of the errors on $p$ for the offline-online strategy in higher dimensions is beyond the scope of this article.  
This also implies that  the LOD with local updates from \cite{HelKM20} needs an increasing fraction of basis updates. For instance, for $p=0.1$, we would require about $60\%$ updates in this example to attain an accuracy comparable to the offline-online strategy. 
Put differently, for $p=0.1$, the LOD with $15\%$ updates (which is a reasonable amount and considered for the timings below) leads to a root mean square error of about $17\%$, in contrast to the reported $3\%$ for the offline-online strategy.

\begin{figure}
	\centering
	\includegraphics[width=0.7\textwidth, trim=5mm 0mm 15mm 10mm, clip=true]{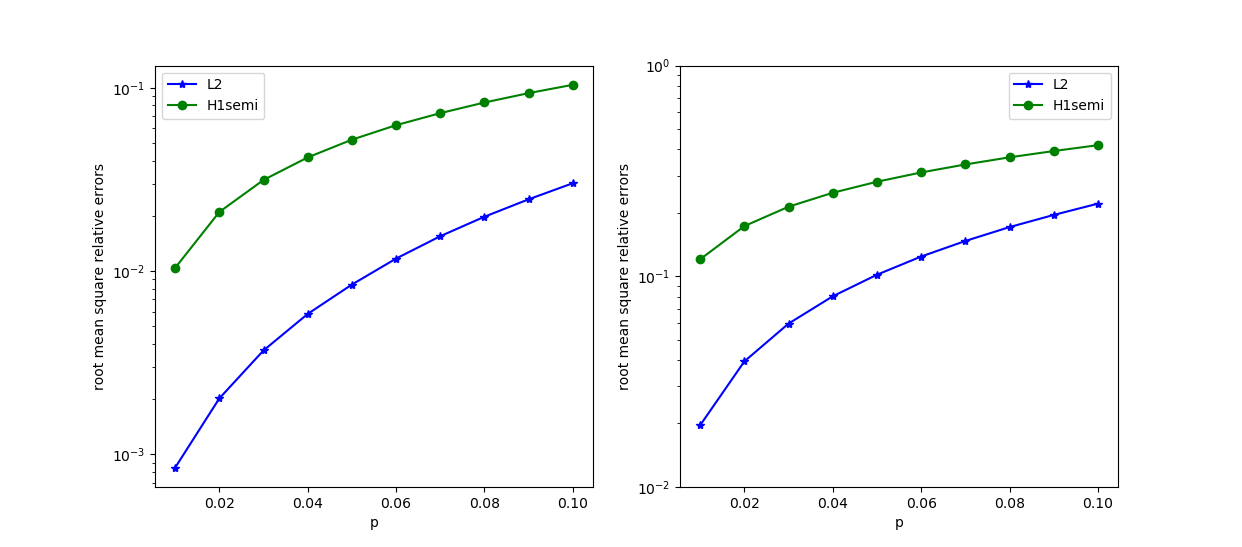}%
	\caption{Root mean square relative errors versus probability for the two-dimensional random checkerboard. Left:  offline-online strategy, right: LOD solution for deterministic coefficient~$A_\varepsilon$.}
	\label{fig:2d:errorsvsprob}
\end{figure}

In the spirit of Section~\ref{subsec:complexity}, let us briefly comment on the run-time of our offline-online-strategy. We consider the same setting as before and fix $p=0.1$. Timings were taken without the parallelization possibilities discussed in Section~\ref{subsec:algo} on a standard desktop computer (Intel i5-7500 core, frequency 3.40 GHz, Ubuntu 20.04).
With the proposed strategy, the offline phase takes about $186$ seconds and the assembly of the \emph{global} stiffness matrix takes less than $3$ seconds for a single sample coefficient in the online phase (averaged over $250$ samples).
In contrast, computing a new LOD stiffness matrix completely for each coefficient takes about $145$ seconds per sample (averaged over $250$ samples). This clearly shows that a standard LOD becomes extremely costly in many Monte Carlo settings.
Computing the LOD stiffness matrix on a single element for $A_\varepsilon$ takes about $0.1$ seconds. The LOD with $15\%$ updates requires about $23$ seconds as matrix assembly time -- including evaluation of the corrector and local re-computations -- for a single sample (averaged over $250$ samples).
This indicates that the offline-online strategy is more attractive than local updates already for moderate sample sizes -- in the considered example,  after about $10$ samples.

\subsection{Periodic coefficient with random defects}
We now consider the two-dimensional version of Example~\ref{ex:randdef} with $\varepsilon=2^{-6}$, $\alpha=1$ and $\beta=10$.
For the PG-LOD, we select $h=2^{-8}$, $H=2^{-4}$ and $m=3$ again guaranteeing that all inclusions are resolved by the fine mesh and that the PG-LOD solution $u_m^H$ serves as a good reference.
We consider the root mean square relative $L^2(D)$-errors for our offline-online strategy over $350$ samples and for defect probabilities $p\in \{0.01, 0.05, 0.1, 0.15\}$.
\begin{figure}
	\centering
	\includegraphics[width=\textwidth, trim=20mm 7mm 18mm 12mm, clip=true]{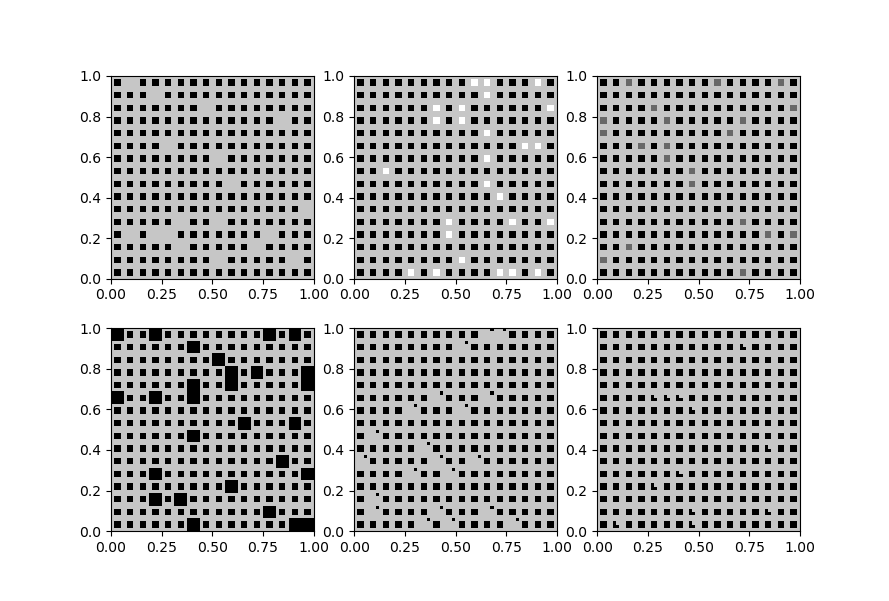}%
	\caption{Different variants of random defects in periodic multiscale coeffcients for $\varepsilon=2^{-4}$ and $p=0.1$. Top row: Changes in value with $\tilde{\beta}\in\{1., 0.5, 5\}$ (from left to right). Bottom row from left to right: \texttt{fill}, \texttt{shift}, and \texttt{Lshape}.}
	\label{fig:def:variants}
\end{figure}
Here we focus on the influence of the following defect possibilities on the errors:
\begin{itemize}
	\item A defect inclusion has the new value $\tilde{\beta}\in \{1, 0.5, 5\}$ where we emphasize that $\tilde{\beta}=1$ means that the inclusion vanishes;
	\item A defect inclusion takes the value $\beta$ in the whole $\varepsilon$-cell (called \texttt{fill});
	\item A defect inclusion is positioned at (scaled and shifted versions of) $[0.75, 1]^2$ (called \texttt{shift});
	\item A defect inclusion has a different shape of (scaled and shifted versions of) $[0.25, 0.75]^2\setminus [0.5, 0.75]^2$ (called \texttt{Lshape}).
\end{itemize}
The different considered possibilities for $p=0.1$ are visualized in Figure~\ref{fig:def:variants} where we chose $\varepsilon=2^{-4}$ to better see the (fine) inclusions.
Note that the model \texttt{shift} does not only shift the inclusion, but even changes its size.

The root mean square relative $L^2(D)$-errors of our offline-online strategy depending on $p$ are depicted in Figure~\ref{fig:2d:defects}.
On the left, we see the influence of the value taken in a defect. 
While all root mean square errors are very small (below $0.5\%$),  the smaller $\tilde{\beta}$, i.e., the value in the defect, the larger the error.
The dependency of the error on the contrast between $\alpha$, $\beta$, and $\tilde{\beta}$ is also indicated by our theoretical findings.

\begin{figure}
	\includegraphics[width=0.47\textwidth, trim=0mm 0mm 15mm 10mm, clip=true]{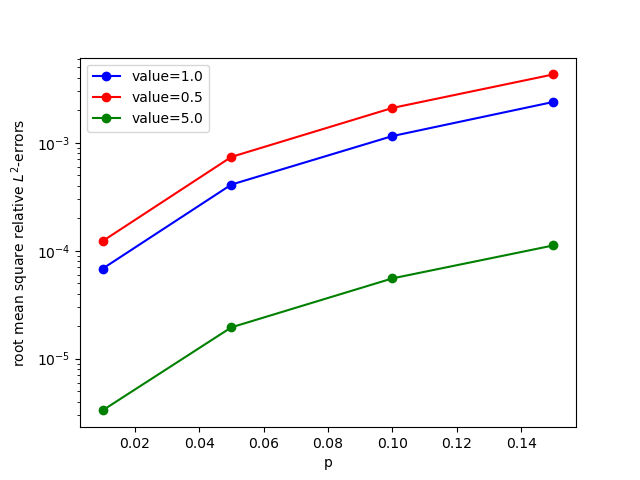}%
	\hspace{2ex}
	\includegraphics[width=0.47\textwidth, trim=0mm 0mm 15mm 10mm, clip=true]{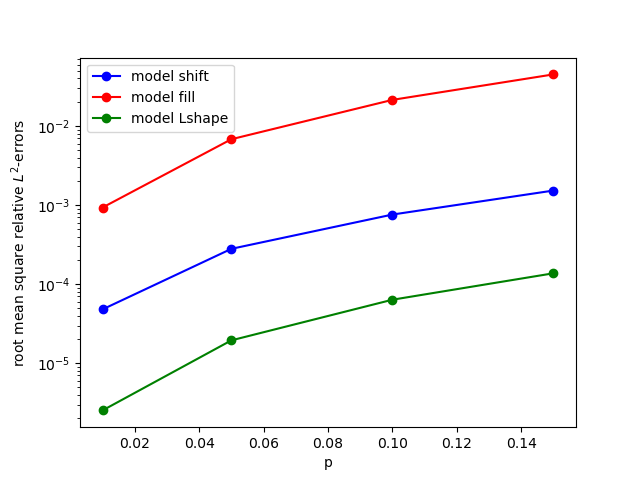}%
	\caption{Root mean square relative $L^2(D)$-error versus probability for the two-dimensional periodic inclusions with defects. Comparison of different defect values (left) and geometry changes (right).}
	\label{fig:2d:defects}
\end{figure}

Figure~\ref{fig:2d:defects} (right) shows the influence of the different geometry changes in the defects on the root mean square errors.
In this experiment, we have errors below $5\%$ up to $p=0.15$ for all considered configurations.
Concerning the differences between the geometry changes, we clearly observe that the model \texttt{fill} is the most difficult and produces the largest errors.
This holds true not only in comparison to \texttt{shift} and \texttt{Lshape} depicted in Figure~\ref{fig:2d:defects} right, but also in comparison to \texttt{value} depicted in Figure~\ref{fig:2d:defects} (left).

\subsection{Error indicator $E_T$}\label{subsec:numexp:ET}
The aim of our final numerical experiment is to illustrate the relation between the error $u_m^H-\tilde{u}_m^H$ locally on an element patch $U_m(T)$ and the indicator $E_T$. 
Note that this is not exactly covered by the theoretical findings of Theorem~\ref{thm:errorhigherdim}.
We approach the question by studying the random checkerboard coefficient of Example~\ref{ex:randcheck} in $d=2$ with $\alpha=0.1$ and $\beta=1.0$. We set $H=1/5$, $m=2$, $\varepsilon=1/20$ and $h=1/40$ so that $D=[0, 1]^2$ represents the element patch $U_m(T)$.
We emphasize that $D$ in this experiment is not meant to represent an actual full computational domain, but serves as proxy for the patch $U_m(T)$. For a ``full'' experiment with our approach, the computational domain would be a domain $\tilde D$ with (approximately at least) $\operatorname{diam}( \tilde D)\gtrsim 6$ in the above setting. However, since we are only interested on the behavior on a single patch $U_m(T)$ in this section, we restrict also our computations to this patch.

For each sample coefficient, we compute $u_m^H$ and $\tilde{u}_m^H$ as in the previous experiments. In this experiment, we consider both the root mean square absolute and relative $L^2(D)$-error as well as the root mean square relative $H^1(D)$-error. 
Additionally, we compute $E_T$ for the element $T$ in the middle of $D$.
Figure~\ref{fig:indic} depicts the root mean squares -- sampled over 500 realizations -- of the errors and the indicator for different values of $p$. We see that the absolute $L^2(D)$-error and $E_T$ show a qualitatively and quantitatively similar behavior which underlines the validity of $E_T$ as an indicator for the (local) error.
The relative $L^2(D)$-error is of a different magnitude since we divide by the norm of $u_m^H$, cf.~\eqref{eq:relerror}.
Amplifying the relative error by a factor of $4$, we observe that there seems to be a good qualitative agreement between the relative error and the indicator (Figure~\ref{fig:indic}, dashed line). In other words, the ratio between $E_T$ and the absolute as well as the relative $L^2(D)$-error seems to be almost constant for varying probabilities $p$.
Note that the almost constant factor between the root mean square of the absolute and relative error also indicates that $\|u_{m}^H\|_{L^2(D)}$ is (almost) independent of $p$ and close to $4$ in this example.
Further, we observe in Figure~\ref{fig:indic} also a good qualitative agreement between the error indicator and the relative $H^1(D)$-error (multiplied by 2). Hence, we can summarize that the previously discussed results for the relation between $E_T$ and the $L^2(D)$-errors seems to carry over to the $H^1(D)$-errors as well. Note that we omitted the absolute $H^1(D)$-errors for better visibility.

\begin{figure}
	\centering
	\includegraphics[width=0.6\textwidth, trim=3mm 0mm 15mm 10mm, clip=true]{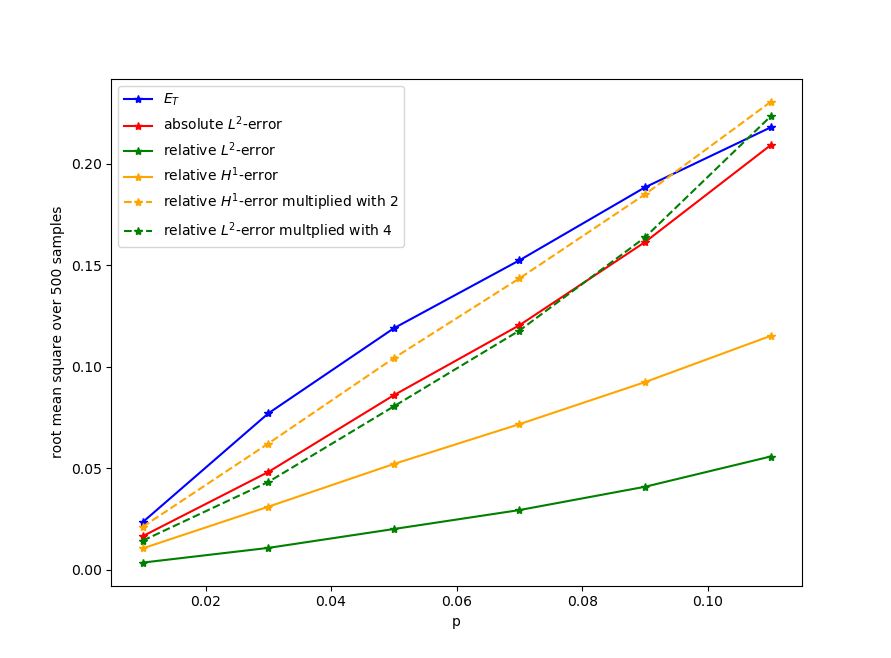}
	\caption{Root mean square of relative and absolute $L^2(D)$-errors, relative $H^1(D)$-error, and indicator $E_T$ versus probability for the two-dimensional random checkerboard.}
	\label{fig:indic}
\end{figure}

\section*{Conclusion}
We presented an offline-online strategy based on the Localized Orthogonal Decomposition (LOD) to compute coarse-scale solutions of elliptic equations with local random perturbations in a Monte Carlo setting.
Exploiting the periodic structure of the underlying deterministic coefficient, LOD stiffness matrices on a single reference element for representative perturbations are computed in an offline phase.
Those are efficiently combined to yield the LOD stiffness matrix for each sample coefficient in the online phase.
We showed error estimates in the one- as well as the higher dimensional case where we derived an indicator. 
Theoretical algorithmic considerations as well as numerical experiments underlined the promising performance of the offline-online strategy.
Overall, the present contribution provides interesting first results on an efficient computational multiscale approach for PDEs with random coefficients. Future research concerns the combination with Monte Carlo-type approaches, the extension of the construction to more general random coefficients, and the improvement of the error indicator to reduce storage of fine-scale quantities.

\appendix
\section{Proof of Theorem~\ref{thm:error1D}}\label{sec:appendix}

For the proof of Theorem~\ref{thm:error1D}, we use the notation from Section~\ref{sec:analysis} and introduce some further abbreviations.
We recall the coefficient $A_\mathrm{harm}^\mu|_T$ associated with $\tilde{\frakb}_T$ for given $\mu_i$  with $\sum_{i=0}^N\mu_i=1$, see~\eqref{eq:Aharmmu}.
Further, we introduce
\begin{equation}\label{eq:barA}
\overline{A}:=\int_0^\varepsilon\frac{1}{A_\varepsilon}\, dx
\end{equation}
and point out that we have
\[\int_T\frac{1}{A_\varepsilon}\, dx = N \overline{A}\]
by the periodicity of $A_\varepsilon$.
Similarly, we denote 
\begin{equation}\label{eq:barApert}
\overline{A}_\mathrm{def}=\int_{\varepsilon(k+Q)}\frac{1}{A_\varepsilon+B_\varepsilon}-\frac{1}{A_\varepsilon}\, dx
\end{equation}
and emphasize that $\overline{A}_\mathrm{def}$ is independent of the choice of $k\in I$ with the index set $I$ from~\eqref{eq:randomness}.

\begin{proof}[Proof of Theorem~\ref{thm:error1D}]
	We estimate the consistency error as
	\begin{align*}
	|(\frakb-\tilde{\frakb})(v,w)|&\leq \sum_{T\in \calT_H}|(\frakb_T-\tilde{\frakb}_T)(v,w)|\\
	&\leq \sum_{T\in \calT_H}\big|A_{\mathrm{harm}}|_T-A_\mathrm{harm}^\mu|_T\big|\,\, \|\nabla v\|_{L^2(T)}\,\|\nabla w\|_{L^2(T)}\\
	&\leq \alpha^{-1} \bigl(\max_{T\in \calT_H} \big|A_\mathrm{harm}|_T-A_\mathrm{harm}^\mu|_T\big|\bigr)\,  \|v\|_{A}\|w\|_{A}.
	\end{align*}
	Hence, it suffices to estimate $|A_\mathrm{harm}|_T-A_\mathrm{harm}^\mu|_T|$ for any $T\in \calT_H$.
	
	With the preliminary observations and notation from above, we have
	\begin{align*}
	A_\mathrm{harm}|_T &= \frac{|T|}{N\overline{A}+N_\mathrm{def}\overline{A}_\mathrm{def}},\\*
	A_\mathrm{harm}^0|_T &= \frac{|T|}{N\overline{A}},\qquad \text{and}\qquad A_\mathrm{harm}^i|_T =\frac{|T|}{N\overline{A}+\overline{A}_\mathrm{def}} \qquad \text{for}\quad i=1,\ldots, N.
	\end{align*}
	Recall that for $i=1, \ldots, N$ we have $\mu_i\in\{0,1\}$ and $\mu_0=1-N_\mathrm{def}$.
	Hence, we deduce
	\begin{equation}\label{eq:Aharmmu1}
	\begin{aligned}
	A_\mathrm{harm}^\mu|_T&=\sum \mu_i A_\mathrm{harm}^i|_T=(1-N_\mathrm{def})\frac{|T|}{N\overline{A}} + N_\mathrm{def}\frac{|T|}{N\overline{A}+\overline{A}_\mathrm{def}}\\
	&=\frac{N\overline{A} |T|+(1-N_\mathrm{def})\overline{A}_\mathrm{def}|T|}{N\overline{A} (N\overline{A}+\overline{A}_\mathrm{def})}\\
	&=\frac{|T|}{N\overline{A}}-\frac{N_\mathrm{def}\overline{A}_\mathrm{def}|T|}{N\overline{A} (N\overline{A}+\overline{A}_\mathrm{def})}.
	\end{aligned}
	\end{equation}
	Combining~\eqref{eq:Aharmmu1} with the expression for $A_\mathrm{harm}|_T$, inserting $N_\mathrm{def}=\theta_{\mathrm{def}, T}N$, and performing a Taylor expansion around $\theta=0$, we obtain
	\begin{align*}
	|A_\mathrm{harm}|_T-A_\mathrm{harm}^\mu|_T|&=|T|\, \Big|\frac{1}{N\overline{A}+\theta_\mathrm{def, T}N\, \overline{A}_\mathrm{def}} - \frac{1}{N\overline{A}} + \theta_{\mathrm{def}, T}\frac{\overline{A}_\mathrm{def}}{\overline{A}(N\overline{A}+\overline{A}_\mathrm{def})}\Bigr|\\
	&\leq |T| \Bigl(0+\theta_{\mathrm{def}, T} \Bigl|\frac{\overline{A}_\mathrm{def}}{\overline{A}(N\overline{A}+\overline{A}_\mathrm{def})}-\frac{\overline{A}_\mathrm{def}}{N\overline{A}^2}\Bigr| + \theta_{\mathrm{def}, T}^2\Bigr|\frac{2\overline{A}_\mathrm{def}^2}{N(\overline{A}+\eta \overline{A}_\mathrm{def})^3}\Bigr|\Bigr)
	\end{align*}
	for some $\eta\in [0, \theta_{\mathrm{def, T}}]$.
	
	Before we estimate the first- and second-order term in the expansion separately, we bound $\overline{A}$ and $\overline{A}_\mathrm{def}$ using~\eqref{eq:boundsAper}--\eqref{eq:boundsAperCper}. We obtain
	\begin{align*}
	\frac{\varepsilon}{\beta}\leq\overline{A}\leq \frac{\varepsilon}{\alpha}\qquad \text{and} \qquad  \overline{A}_\mathrm{def}\leq\varepsilon|Q|\Bigl(\frac{1}{\alpha}-\frac{1}{\beta}\Bigr)
	\end{align*}
	as well as (writing $Q=[q_0, q_1]$)
	\begin{align*}
	N\overline{A}+\overline{A}_\mathrm{def}&=N\int_0^\varepsilon \frac{1}{A_\varepsilon}\, dx+\int_{\varepsilon q_0}^{\varepsilon q_1}\frac{1}{A_\varepsilon+B_\varepsilon}-\frac{1}{A_\varepsilon}\, dx\\
	&=(N-1)\int_0^\varepsilon \frac{1}{A_\varepsilon}\, dx + \int_0^{\varepsilon q_0} \frac{1}{A_\varepsilon}\, dx + \int_{\varepsilon q_1}^\varepsilon \frac{1}{A_\varepsilon}\, dx + \int_{\varepsilon q_0}^{\varepsilon q_1}\frac{1}{A_\varepsilon+B_\varepsilon}\, dx\\
	&\geq \frac{1}{\beta}((N-1)\varepsilon+\varepsilon q_0+\varepsilon(1-q_1)+\varepsilon(q_1-q_0))=\frac{N\varepsilon}{\beta}.
	\end{align*}
	
	For the first-order term in the Taylor expansion we deduce
	\begin{align*}
	\Bigl|\frac{\overline{A}_\mathrm{def}}{\overline{A}(N\overline{A}+\overline{A}_\mathrm{def})}-\frac{\overline{A}_\mathrm{def}}{N\overline{A}^2}\Bigr|&=\Bigl|\frac{\overline{A}_\mathrm{def}(N\overline{A}-(N\overline{A}+\overline{A}_\mathrm{def}))}{N\overline{A}^2(N\overline{A}+\overline{A}_\mathrm{def})}\Bigr|=\Bigl|\frac{\overline{A}_\mathrm{def}^2}{N\overline{A}^2(N\overline{A}+\overline{A}_\mathrm{def})}\Bigr|\\*
	&\leq \varepsilon^2 |Q|^2\Bigl(\frac{1}{\alpha}-\frac{1}{\beta}\Bigr)^2 \frac{\beta^3}{\varepsilon^3 N^2}=\frac{\beta^3\, |Q|^2}{\varepsilon\, N^2} \Bigl(\frac{1}{\alpha}-\frac{1}{\beta}\Bigr)^2.
	\end{align*}
	Similarly, the second-order term in the Taylor expansion can be estimated as
	\begin{align*}
	\Bigr|\frac{2\overline{A}_\mathrm{def}^2}{N(\overline{A}+\eta \overline{A}_\mathrm{def})^3}\Bigr|&\leq \frac{2\varepsilon^2|Q|^2}{N}\Bigl(\frac{1}{\alpha}-\frac{1}{\beta}\Bigr)^2\frac{\beta^3}{\varepsilon^3 (1+|Q|(1-\eta))^3}\\*
	&\leq \frac{2\varepsilon^2|Q|^2\beta^3}{N\varepsilon}\Bigl(\frac{1}{\alpha}-\frac{1}{\beta}\Bigr)^2.
	\end{align*}
	Finally, we obtain with $|T|=H$ and $N=H/\varepsilon$ that
	\begin{equation*}
	|A_\mathrm{harm}|_T-A_\mathrm{harm}^\mu|_T|\leq |Q|^2 \beta^3 \Bigl(\frac{1}{\alpha}-\frac{1}{\beta}\Bigr)^2 \frac{\varepsilon}{H}\theta_{\mathrm{def}, T}+ 2|Q|^2\beta^3 \Bigl(\frac{1}{\alpha}-\frac{1}{\beta}\Bigr)^2 \theta_{\mathrm{def}, T}^2.\qedhere
	\end{equation*}
\end{proof}

\end{document}